\newtheorem{theorem}{Theorem}
\newtheorem{definition-theorem}[theorem]{Definition-Theorem}
\newtheorem{proposition}[theorem]{Proposition}
\newtheorem{lemma}[theorem]{Lemma}
\newtheorem{remark}[theorem]{Remark}
\newcommand{\out}{\mathrm{out}}
\renewcommand{\H}{\mathrm{F}}
\renewcommand{\int}{\mathrm{int}}
\newcommand{\Vor}{\mathrm{Vor}}
\newcommand{\FP}{\mathcal{FP}}
\newcommand{\SC}{\mathcal {SC}}
\newcommand{\E}{\mathbb E}
\newcommand{\supp}{\mathrm{supp}}
\newcommand{\Cay}{\mathrm{Cay}}
\newcommand{\codim}{\mathrm{codim}}
\newcommand{\Eu}{\Xi_1}
\newcommand{\conv}{\textrm{conv-hull}}
\newcommand{\cov}{\mathrm{Cov}}
\newcommand{\din}{d^{\mathrm{\,in}}}
\newcommand{\dout}{d^{\mathrm{\,out}}}
\title{Lattice of Integer Flows  and  Poset of Strongly Connected Orientations}
\author{Omid Amini}
\address{CNRS-CMLS, \'Ecole Polytechnique, Palaiseau, France}
\email{omid.amini@polytechnique.edu}
\thanks{Special thanks to Madhusudan Manjunath for helpful discussions and Mathieu Desbrun for his support. 
\\
This work was done in 2010 while the author was visiting the California Institute of Technology. }
\begin{document}
\begin{abstract}
We show that the Voronoi cells of the lattice of integer flows of a finite connected graph $G$ in the quadratic vector space of real valued flows have the following very precise combinatorics: the face poset of a Voronoi cell is isomorphic to the poset of strongly connected orientations of subgraphs of $G$. This confirms a recent conjecture of Caporaso and Viviani $\{$\textsl{Torelli Theorem For Graphs and Tropical Curves}, Duke Math. J. \textbf{153}(1) (2010), 129-171$\}$.
\end{abstract}

\maketitle

\section{Introduction}\label{sec:intro}

\subsection{Lattice of integer flows}
Consider a finite graph $G = (V,E)$ with possible parallel edges and loops. 
All through the paper, we suppose that $G$ is connected. Following the classical works (e.g.,\cite{Ser77}, \cite{BHN97}), we replace the set of edges of $G$ with the set of oriented edges $\mathbb E$, where each edge $e\in E$ is replaces with two oriented edges in opposite directions in $\mathbb E$. By the abuse of notation, we also denote by $e$ an element of $\mathbb E$, and use $\bar e$ when referring to the same edge but with the inverse orientation.

For a coefficient ring $A$, let $C_0(G,A)$ be the free $A$-module generated by the set of vertices $V$ and let $C_1(G,A)$ be the quotient of the free $A$-module generated by the set of oriented edges $\mathbb E$ by all the relations $(e)=-(\bar e)$ for any oriented edge $e\in \mathbb E$. The basis elements of $C_0(G,A)$ are denoted by $(v)$ for $v\in V$ and the basis elements of $C_1(G,A)$ are denoted by $(e)$ for $e\in \mathbb E$. The chain complex $\mathcal C_*(G,A): C_1(G,A)\stackrel{\partial}{\longrightarrow} C_0(G,A)$ is the usual simplicial chain complex of $G$: the boundary map is given by $\partial (e) = (u)-(v)$ for $u$ and $v$ being respectively the head and the tail of the oriented edge $e$. 

Similarly, one can define the cochain complex $\mathcal C^*: C^0(G,A)\stackrel {d}{\longrightarrow} C^{1}(G,A)$ by letting $C^0(G,A)$ to be the $A$-module of all the functions $f: V \rightarrow A$ and  $C^1(G,A)$ to be the $A$-module of all the functions $g: \mathbb E \rightarrow A$ verifying the property that $g(e) = -\, g(\bar e)$ for any $e\in \E$. The differential $d$ is defined as follows: $d(f)(e) = f(u)-f(v)$ where $u$ and $v$ are respectively the head and the tail of the oriented edge $e$. Note that the spaces $C_i(G,A)$ and $C^i(G,A)$ are canonically dual for $i=0,1$. 

 The (co)homology of  $\mathcal C_*$ and $\mathcal C^*$ have a rather simple description: 
 \begin{itemize}
 \item $H_1(\mathcal C_*(G,A))\simeq A^g$, i.e.,  is a free $A$-module of rank $g$ where $g$ is the genus of the (non-oriented) graph $G$. $H_0(C_*(G,A)) \simeq A$. 
 \item Similarly, $H^1(\mathcal C_*(G,A)) \simeq A^g$ and $H^0(\mathcal C^*(G,A))\simeq A$.
 \end{itemize}

The first homology group $H_1(\mathcal C_*(G,A))$ also coincides with the $A$-module of all the $A$-valued flows in $G$. 
Recall that a \emph{flow} $x$ in $G$ with values in $A$ is the data of $\{x_e \in A\}_{e\in \mathbb E}$ for any $e\in \mathbb E$ with the following properties:
\begin{itemize}
\item For any oriented edge $e\in \mathbb E$, $x_e = -x_{\bar e}$.
\item For any vertex $v$, let $\mathbb E^{\out}(v)$  be the set of all the oriented edges emanating from $v$, i.e., with tail equal to $v$. Then $\sum_{e \in \mathbb E^{\out}(v)}\,x_{e} = 0$. (In other words: the amount of the flow coming out of $u$ is equal to the amount of the flow entering $u$.) 
\end{itemize}

Let $\H$ be the vector space of  real-valued flows in $G$, i.e., $\H:=H_1(\mathcal C_*(G,\mathbb R))$. And let $\Lambda$ be the lattice of integer-valued flows in $\H$, i.e., $\Lambda:=H_1(\mathcal C_*(G,\mathbb Z))$, $\Lambda \subset \H$. Note that $\H$ is a vector subspace of dimension $g$ in $C_1(G,\mathbb R)$ and $\Lambda$ has rank $g$.

\subsection{Positive quadratic form $q$} There is a natural inner-product on  $C_1(G,\mathbb R)$ (and a natural inner product on $C_0(G,\mathbb R)$) given by the simplicial structure of $G$. This is given on the basis as follows

\begin{align*}
\langle (e),(e') \rangle :=
\left\{
\begin{array}{rl}
1  & \mbox{if } e' = e, \,\,\, \textrm{and}\\
-1 &  \hspace{.9cm} \mbox{if } e' = \bar e;\\
0 & \mbox{otherwise.}
\end{array}
\right.
\end{align*}

(Similarly on $C_0(G,\mathbb R)$, $\langle (u),(v) \rangle := \delta_{u,v}$ for vertices $u,v \in V$.)

\vspace{.2cm}
\noindent It is straightforward to check that the above pairings naturally identify the dual spaces $C_i(G,\mathbb R)$ and $C^i(G,\mathbb R)$ and the adjoint $d^*$ of $d$ gets identified with $\partial$.
 
 \vspace{.4cm}

The above inner-product defines by restriction an inner-product on $\H$, denoted again by $\langle\,,\rangle$;  let $q$ be the corresponding positive quadratic form on $F$ given by $q(x) = \langle x,x \rangle$, for $x\in \H$.

\subsection{Voronoi diagram of $\Lambda$ in $H$} Consider a discrete subset $S$ in the real vector space $\H$. For a point $\lambda$ in $S$, we define the {\it Voronoi cell} of $\lambda$ with respect to $q$  as 
$$V_\lambda\:=\:\Bigl\{x \in \H \:\:|\: \:q(x-\lambda)\leq q(x-\mu)\:\: \textrm{for any other point}\:\: \mu \in S\:\Bigr\}.$$

\noindent The {\it Voronoi diagram} $\Vor_q(\mathcal S)$  is the decomposition of $\H$ induced by the cells $V_\lambda$, for $\lambda\in\mathcal S\:.$  It is straightforward to see that each cell of the Voronoi diagram is a polytope  or a polyhedron (in the case there are cells of infinite volume).

Consider the discrete subset $\Lambda$ of $\H$, which is of full rank.  By translation-invariance of the distances defined by $q$, the cells of the Voronoi diagram are all simply translations of each other, i.e., for a point $\lambda $ in $\Lambda$, $V_\lambda = V_O+\lambda$, where $O$ denotes the origin.  Thus, the Voronoi cell decomposition of $\H$ is completely understood by the Voronoi cell $V_{O}$, which is easily seen to be a polytope.

\vspace{.4cm}

 Consider now the set of all faces of the polytope $V_0$ ordered by inclusion. They form a finite poset that we denote by $\FP$. Our aim in this paper is to give a precise description of this poset in terms of the original graph $G$, that we now explain.

\subsection{Poset of strongly connected orientations of subgraphs} Let $D$ be an orientation of a connected graph $G$. $D$ is called {\it strongly connected} if any pair of vertices $u$ and $v$ in $D$ are connected by an oriented path from $u$ to $v$ and an oriented path from $v$ to $u$. An orientation of a (non necessarily connected) graph $G$ is called strongly connected if the orientation induced on each of the connected components is strongly connected.  

 Let $G$ be a given graph (with possibly multiple edges and loops). Define the following poset $\SC$, that we call the poset of strongly connected orientations of subgraphs of $G$: the elements of $\SC(G)$ are all the pairs $(H,D_H)$ where $H$ is a subgraph of $G$ and $D_H$ is a strongly connected orientation of the edges of $H$. (Note that the vertices and the edges of $G$ are labeled, so parallel edges are distinguished in dealing with subgraphs.) A partial order $\preceq$ is defined on $\SC$ as follows: given two elements $(H,D_H)$ and $(H',D_{H'})$ in $\SC$, we have $(H,D_H) \preceq (H',D_{H'})$ if and only if $H' \subseteq H$ and the orientation $D_{H'}$ is the orientation induced by $D_H$ on $H'$. It is quite straightforward to see that $(\SC,\preceq)$ is a graded poset, and the grading is given by $g$ minus the genus of the underlying subgraph. Note that $(\emptyset,\emptyset)$ is the maximum element of $(\SC,\preceq)$.

\vspace{.4cm}

 The main result of this paper is the following theorem.
 
 \begin{theorem}\label{thm:main}
 The two posets $\FP$ and $\SC$ are isomorphic. 
 \end{theorem}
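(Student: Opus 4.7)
The plan is to identify the Voronoi cell $V_O$ with the graphic zonotope of $G$, and then read off its face structure using the oriented matroid of $G$.

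The first and main step is to establish the zonotope description
\[
V_O \;=\; \pi_{\H}(B), \qquad B := [-\tfrac12,\tfrac12]^E \,\subset\, C_1(G,\mathbb R),
\]
where $\pi_{\H} : C_1(G,\mathbb R) \to \H$ is the orthogonal projection along the cut subspace $\H^\perp$. The inclusion $\pi_{\H}(B) \subseteq V_O$ is immediate from $C_1(G,\mathbb R)=\H\oplus \H^\perp$ together with the fact that $B$ is the Voronoi cell of $\mathbb Z^E$ in $\mathbb R^E$. For the reverse inclusion I will combine two reductions. First, by the standard flow decomposition, every integer flow writes as $\lambda = \sum_i n_i\,\chi_{C_i}$ with positive integers $n_i$ and oriented simple cycles $C_i$ that are pairwise sign-consistent on every edge; using $n_i \leq n_i^2$ and the non-negativity of the pairwise inner products $\langle \chi_{C_i}, \chi_{C_j}\rangle$, one can reduce the defining inequalities of $V_O$ to the family $2\langle x,\chi_C \rangle \le q(\chi_C)$ indexed only by oriented simple cycles $C$. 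Second, $x \in \pi_{\H}(B)$ amounts to the existence of a potential $f : V \to \mathbb R$ whose edge differences $f(\mathrm{head}\,e) - f(\mathrm{tail}\,e)$ lie in $[x_e - \tfrac12, x_e + \tfrac12]$; a classical potential-existence theorem (a Hoffman-style duality) shows that this is feasible precisely when $|\langle x, \chi_C\rangle| \leq q(\chi_C)/2$ holds for every oriented cycle $C$---the same family of cycle inequalities.

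With the zonotope identification in hand, $V_O = \sum_e [-\tfrac12,\tfrac12]\cdot \pi_{\H}(e)$ is a central zonotope in $\H$, and its face poset is that of covectors of the linear oriented matroid on $\{\pi_{\H}(e)\}_{e \in E}$. A covector is a sign vector $\sigma : E \to \{+,-,0\}$ arising as $\sigma(e) = \mathrm{sign}(\phi_e)$ for some $\phi \in \H$. On its support $H = \{e : \sigma(e)\neq 0\}$, the flow $\phi$ is nowhere zero, and by the classical equivalence between strong connectivity of an orientation and the existence of a positive circulation compatible with it, $\sigma|_H$ is precisely a strongly connected orientation of $H$; conversely every element of $\SC$ arises this way. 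This gives a natural bijection between covectors and $\SC$, with $(\emptyset,\emptyset)$ corresponding to $\sigma\equiv 0$ (the full cell $V_O$) and the strongly connected orientations of $G$ itself corresponding to vertices of $V_O$. The standard zonotope/oriented-matroid dictionary then tells us that $F_\sigma \subseteq F_{\sigma'}$ iff $\sigma$ coordinate-wise refines $\sigma'$ (larger support, same signs where $\sigma'$ is non-zero), which is exactly the order $\preceq$ on $\SC$; the dimension of $F_\sigma$ equals the genus of $H$, matching the grading by $g$ minus the genus of the underlying subgraph.

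The main obstacle is the reverse inclusion $V_O \subseteq \pi_{\H}(B)$ in the first step, which combines the flow-decomposition reduction of the Voronoi cell to simple-cycle inequalities with the potential-existence duality. Once the identification $V_O = \pi_\H(B)$ is secured, the bijection with $\SC$ and its compatibility with the partial orders is a direct application of standard zonotope and oriented matroid theory specialized to the dual of the graphic matroid of $G$.
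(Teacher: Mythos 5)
Your proposal is correct, and it takes a genuinely different route from the paper's proof. The paper argues directly on the Voronoi cell: it first shows (by the midpoint trick plus a circuit-decomposition argument) that $V_\lambda\cap V_O\neq\emptyset$ iff $\lambda$ is Eulerian, identifies $\Vor_q$ with the arrangement of the circuit hyperplanes $\H_\lambda$, then defines $\phi(F)=\bigcup_{\lambda\in\mathcal U_F}\supp(\lambda)$ and proves injectivity, order-preservation, and surjectivity by hand, the last step via an explicit description of vertices $v^D$ of $V_O$ as strongly connected orientations of $G$ and faces as convex hulls of the $v^{D'}$ over extensions $D'\supseteq D$.

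What you do instead is prove the structural identity $V_O=\pi_\H\bigl([-\tfrac12,\tfrac12]^E\bigr)$, i.e.\ $V_O$ is the graphic zonotope in $\H$, and then quote the zonotope/oriented-matroid dictionary: faces of a zonotope correspond (order-reversingly) to covectors of the generating vector configuration, and covectors of $\{\pi_\H(e)\}_{e\in E}$ are exactly sign vectors of real flows, which by the classical positive-circulation criterion are exactly the strongly connected orientations of subgraphs. The two reductions you sketch for $V_O\subseteq\pi_\H(B)$ are sound: the sign-consistent decomposition $\lambda=\sum n_i\chi_{C_i}$ gives $q(\lambda)=\sum n_in_j\langle\chi_{C_i},\chi_{C_j}\rangle\ge\sum n_i^2|C_i|\ge\sum n_i|C_i|$, so circuit inequalities imply all Voronoi inequalities; and the Hoffman-type potential duality characterizes membership in $\pi_\H(B)$ by the same circuit inequalities. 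The grading and partial order also match (codimension of $F_\sigma$ equals the genus of $\supp\sigma$, and face inclusion corresponds to the covector order, hence to reverse inclusion of supports). What your approach buys is conceptual economy -- once $V_O$ is recognized as the cographic zonotope, the entire face poset is read off from standard oriented-matroid theory -- at the price of invoking that machinery (potential-feasibility duality, the zonotope face-lattice theorem, the circulation/strong-connectivity equivalence) as black boxes, whereas the paper's argument is elementary and self-contained. Two small points to tighten if you write this up: state explicitly that the bijection face $\leftrightarrow$ covector is order-reversing and that covector $\to\SC$ is also order-reversing, so the composite is order-preserving; and note that bridges project to zero under $\pi_\H$, so they never appear in a covector's support, matching the fact that a strongly connected orientation contains no bridge.
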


 This is precisely Conjecture 5.2.8 of Caporaso and Viviani~\cite{CV10}, which answers as well a question asked by Bacher, de la Harpe, and Nagnibeda in~\cite{BHN97}. The two posets $\FP$ and $\SC$ are the posets $\mathrm{Faces}(\Vor_\Gamma)$ and $\mathcal{OP}_\Gamma$ in the notations of~\cite{CV10} where $G=\Gamma$ is the metric graph with all lengths equal to one. The proof of our theorem can be extended  in a straightforward way to more general metric graph, thus, Conjecture 5.2.8 of~\cite{CV10} holds.  We have presented the proofs in the unweighted case to simplify the presentation.  Note that Conjecture 5.2.8 of~\cite{CV10} has a second part (explained in Section~\ref{sec:secondpart}): we will establish a precise bijection between the elements of the two posets, from this bijection the second part of the conjecture easily follows.

\section{Proof of Theorem~\ref{thm:main}}

A (non-necessarily connected) graph is Eulerian if every vertex has an even degree. An orientation of an Eulerian graph is called Eulerian if each vertex has the same in- and out-degrees. It is clear that every Eulerian orientation of a graph is necessarily strongly connected (in the sense we describes above). Also note that an Eulerian orientation $D$  of a graph $H$ has a canonical flow denoted by $x^D$. This is defined by setting $x^D_e=1$ if $e \in D$ (thus, $x_e^D = -1$ if $\bar e\in D$) and $x^D_e =0$ if neither $e$ nor $\bar e$ is in $D$. For simplicity, we say a flow $x$ in a graph $G$ is Eulerian if it is of the form $x^D$ for some $D$ an Eulerian orientation of a subgraph $H$ of $G$. The genus of an Eulerian flow $\mu=x^D$, for $D$ an Eulerian orientation of an Eulerian graph $H$, is by definition the genus of  $H$. In the case $H$ is disconnected, this latter is the sum of the genus of the connected components of $H$. 

Given a flow $x$, by the support of $x$, denoted by $\supp(x)$, we mean the set of all the oriented edges $e\in \mathbb E$ with $x_e >0$. Clearly the oriented graph defined by $\supp(x)$ is strongly connected. (Different flows can however have the same support.) Note also that for an Eulerian flow $x$, $\supp(x)$ is the Eulerian orientation with $x= x^{\supp(x)}$.

\begin{lemma}[Intersecting Voronoi Cells]\label{lem:eulerian}
A Voronoi cell $V_\lambda$, for $\lambda \in \Lambda$,  intersects the Voronoi cell $V_O$ of the origin if and only if $\lambda$ is Eulerian. For two points $\lambda$ and $\mu$ in $\Lambda$, $V_\lambda \cap V_\mu \neq \emptyset$ if and only if $\lambda - \mu$ is Eulerian. 
\end{lemma}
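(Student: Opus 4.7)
The plan is to convert the geometric intersection condition into an algebraic shortest-vector condition on the coset $\lambda+2\Lambda$, and then to solve that by a short parity argument. By the translation-invariance $V_\lambda=V_O+\lambda$ stated above, $V_\lambda\cap V_\mu\neq\emptyset$ iff $V_{\lambda-\mu}\cap V_O\neq\emptyset$, so it suffices to prove the first assertion (the second then follows since $\lambda-\mu\in\Lambda$).

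The first step (the midpoint criterion) is to show that $V_O\cap V_\lambda\neq\emptyset$ if and only if $\lambda/2\in V_O$. The backward direction is immediate since $\lambda/2$ is equidistant from $O$ and $\lambda$. For the forward direction, pick $x\in V_O\cap V_\lambda$; the two memberships force $q(x)=q(x-\lambda)$, hence $2\langle x,\lambda\rangle=q(\lambda)$. For any $\mu\in\Lambda$, applying the Voronoi inequality at the lattice points $\mu$ and $\lambda-\mu$ yields
\[
2\langle x,\mu\rangle\leq q(\mu),\qquad 2\langle x,\lambda-\mu\rangle\leq q(\lambda)-2\langle\lambda,\mu\rangle+q(\mu).
\]
Substituting $2\langle x,\lambda\rangle=q(\lambda)$ into the second inequality and combining with the first gives $\langle\lambda,\mu\rangle\leq q(\mu)$, which is precisely the Voronoi condition for $\lambda/2$ tested against $\mu$; as $\mu$ was arbitrary, $\lambda/2\in V_O$. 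The condition $\lambda/2\in V_O$ unfolds, after scaling by $4$ and substituting $\nu=-\mu$, to $q(\lambda)\leq q(\lambda+2\nu)$ for every $\nu\in\Lambda$; equivalently, $\lambda$ is a shortest vector (under $q$) in its coset $\lambda+2\Lambda$.

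The second step is to identify these shortest vectors with the Eulerian flows. The elementary inequality $n^2\geq n\bmod 2$ for $n\in\mathbb{Z}$, with equality iff $n\in\{-1,0,1\}$, summed edge-wise after fixing a reference orientation, gives
\[
q(\mu)=\sum_e \mu_e^2\ \geq\ \sum_e(\mu_e\bmod 2)\qquad\text{for all }\mu\in\Lambda,
\]
and the right side depends only on the class of $\mu$ in $\Lambda/2\Lambda\simeq H_1(\mathcal C_*(G,\mathbb F_2))$. Every such class is the characteristic function of an Eulerian subgraph $H$ of $G$ (since $\mathbb F_2$-cycles are exactly subgraphs with even degrees); decomposing $H$ into edge-disjoint simple cycles and orienting each cycle consistently produces an Eulerian orientation $D$ of $H$, whose canonical flow $x^D$ lies in the coset with all coordinates in $\{-1,0,1\}$ and therefore saturates the displayed inequality. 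Conversely, any $\mu$ in the coset saturating the inequality must have $\mu_e\in\{-1,0,1\}$ for every $e$ by the equality case, and then the sign of $\mu$ defines an Eulerian orientation of the subgraph $\{e:\mu_e\neq 0\}$, i.e.\ $\mu$ is Eulerian. Hence the shortest vectors in $\lambda+2\Lambda$ are precisely the Eulerian flows; chaining with the midpoint criterion yields the lemma.

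I expect the main obstacle to be the midpoint criterion: showing that the \emph{existence} of some intersection point already forces the specific point $\lambda/2$ into $V_O$. The decisive trick is to average the Voronoi inequality over the involutive pair $(\mu,\lambda-\mu)$; once that is in hand, the remaining steps reduce to a rescaling and a routine $\mathbb F_2$-homology calculation on the flow lattice.
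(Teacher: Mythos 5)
Your proof is correct. The midpoint step is essentially the paper's argument rearranged: you test the Voronoi inequality at the pair $(\mu,\lambda-\mu)$ and substitute $2\langle x,\lambda\rangle=q(\lambda)$ to deduce $\langle\lambda,\mu\rangle\le q(\mu)$ directly, whereas the paper argues by contradiction inside the same parallelogram $\{O,\lambda,\mu,\lambda-\mu\}$; the underlying algebra is identical. The second step, however, is a genuinely different route. The paper takes $D=\supp(\lambda)$ (a strongly connected orientation), tests the Voronoi inequality against the circuit flows $x^C$ for $C\subset D$, and combines $\langle\lambda,x^C\rangle\ge q(x^C)$ (from positivity of $\lambda$ on its support) with $\langle\lambda,x^C\rangle\le q(x^C)$ to force $\lambda_e=1$ on every circuit, hence on all of $\supp(\lambda)$. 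You instead rescale $\lambda/2\in V_O$ into the shortest-vector condition $q(\lambda)\le q(\lambda+2\nu)$ for all $\nu\in\Lambda$, lower-bound $q$ on each coset of $2\Lambda$ by the coordinate-wise inequality $n^2\ge(n\bmod 2)$, and observe that the equality case forces coordinates in $\{-1,0,1\}$, identifying the shortest vectors of each coset with the Eulerian flows via $\Lambda/2\Lambda\simeq H_1(G,\mathbb F_2)$. Your version is somewhat cleaner in that it proves both implications symmetrically and at once (Eulerian $\Leftrightarrow$ saturates the parity bound $\Leftrightarrow$ shortest in coset), connects explicitly to the classical picture of Voronoi-adjacent cells being indexed by shortest coset representatives mod $2\Lambda$, and avoids invoking strong connectivity of $\supp(\lambda)$ or a greedy circuit decomposition; the paper's version is more self-contained graph-theoretically and sets up the circuit machinery that it reuses in the subsequent lemmas.
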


\begin{proof} We first show that if $V_\lambda \cap V_O \neq \emptyset$, then $\lambda/2 \in V_\lambda \cap V_O $. This will then easily implies the lemma.
  
\noindent The claim is a particular property of the Voronoi diagrams defined by the lattices.  Suppose that $\lambda/2 \notin V_\lambda \cap V_O$. This means there exists a point $\mu \in \Lambda$ such that $q(\mu-\lambda/2) < q(\lambda/2)$, in other words, $\langle\lambda,\mu\rangle > q(\mu)$ or equivalently, $\langle \lambda- \mu, \mu \rangle > 0$. Consider now the parallelogram defined by $O$, $\lambda$, $\mu$ and $\lambda-\mu$ (which is in $\Lambda$). We show that a point $x \in V_O\cap V_\lambda$ is certainly closer in distance to one of $\mu$ or $\lambda-\mu$, and this will be a contradiction. 
By assumption $q(x) = q(x-\lambda)$, which gives $2\langle x,\lambda \rangle = q(\lambda)$. Since $q(x) \leq q(x-\mu)$, we have $2\langle x,\mu\rangle \leq q(\mu)$. Combining the two (in-)equalities, we have 
$$2 \langle x , \lambda -\mu \rangle \geq q(\lambda) - q(\mu) = q(\lambda -\mu) + 2\langle \lambda -\mu, \mu \rangle > q(\lambda -\mu).$$
This shows that  $q(x) > q(x-\lambda+\mu)$, which is a contradiction to the assumption $x \in V_O$. And the claim follows. 

Suppose now $\lambda \in \Lambda$ a lattice point such that $\lambda/2 \in V_O \cap V_\lambda$. This latter condition is equivalent to the following:
\begin{center}
For all $\mu \in \Lambda$, \qquad $\langle \lambda, \mu \rangle \leq q(\mu). $
\end{center}
Let $D = \supp(\lambda)$. We observed that $D$ is a strongly connected orientation. For any oriented cycle $C$ in $D$, consider the flow $x^C$ in $G$. We must have $\langle \lambda, x^C \rangle = \sum_{e\in C} \lambda_e \geq q(C).$ By combining these two inequalities, we infer that the equality holds, and so, $\lambda_e = 1$ for all $e\in C$. Since every oriented edge in $\supp(\lambda)$ is contained in an oriented cycle $C \subset \supp(\lambda)$, we have $\mu_e =1$ for all $e\in \supp(\lambda)$.  This shows that $\lambda$ is Eulerian and the first claim of the lemma follows.  The second claim follows by translation invariance. 
\end{proof}

\subsection{Codimension one faces of $\FP$: Description of the Delaunay Edges}
We provide now a description of the faces of codimension one in $\FP$; this clearly provides a description of the dual Delaunay edges.  (Recall that the one-skeleton of the Delaunay dual is the graph obtained on $\Lambda$ by joining two points $\lambda$ and $\mu$ if $V_\lambda \cap V_\mu$ is a face of codimension one in both $V_\lambda$ and $V_\mu$.)

Let $F \in \FP$ be a face of codimension one and let $x$ denote a generic point of $F$ (an arbitrary point in the relative interior of $F$).  Since $F$ is of codimension one, there is a Voronoi cell $V_\lambda$ such that $F \subseteq V_O \cap V_\lambda$. By Lemma~\ref{lem:eulerian}, $\lambda$ is Eulerian. 

\begin{lemma}\label{lem:codim}
The intersection $V_O \cap V_\lambda$ lies in an affine plane of codimension equal to the genus of $\supp(\mu)$.  In particular, if $V_O$ and $V_\lambda$ intersect in a face of codimension one, then $\supp(\mu)$ is  a circuit  $C$ (an orientation of a cycle) and $\mu = x^{C}$.
\end{lemma}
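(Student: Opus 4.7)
The plan is to derive enough linear constraints on $x$ from the conditions $x\in V_O$ and $x\in V_\lambda$ to trap $V_O\cap V_\lambda$ inside an affine plane of the desired codimension. Since the intersection is nonempty, Lemma~\ref{lem:eulerian} gives that $\lambda$ is Eulerian; write $D=\supp(\lambda)$ so that $\lambda=x^D$, let $H$ be the underlying subgraph, and set $h:=g(H)$.

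The key observation is a two-sided bound indexed by directed cycles of $D$. For every oriented cycle $C$ contained in $D$, both $x^C$ and $\lambda-x^C$ belong to $\Lambda$; the latter is the Eulerian flow of the still-Eulerian orientation $D\setminus C$. Applying the Voronoi inequality for $x\in V_O$ against $x^C$ gives $2\langle x,x^C\rangle\leq q(x^C)=|C|$. Applying the inequality for $x\in V_\lambda$ against $\lambda-x^C$, then expanding $q(x-\lambda+x^C)\geq q(x-\lambda)$ and using $\langle\lambda,x^C\rangle=|C|$, yields the reverse inequality $2\langle x,x^C\rangle\geq |C|$. Combining,
\[
2\langle x,x^C\rangle = q(x^C) \qquad \text{for every oriented cycle } C\subseteq D,
\]
and consistency of the resulting affine system is witnessed by the midpoint $\lambda/2$, which already lies in $V_O\cap V_\lambda$ by the opening argument of the proof of Lemma~\ref{lem:eulerian}.

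The next step is to show that the real span of $\{x^C : C \text{ an oriented cycle of } D\}$ equals the full cycle space $H_1(\mathcal C_*(H,\mathbb R))$, which has dimension $h$. I plan to use a standard flow-decomposition argument: any nonnegative integer flow on the oriented graph $D$ is a nonnegative integer combination of $x^{C_j}$ for directed cycles $C_j$ in $D$; and for an arbitrary $y\in H_1(\mathcal C_*(H,\mathbb Z))$, the shifted flow $y+Nx^D$ is coordinatewise nonnegative on $D$ once $N$ is large, while $x^D$ itself decomposes into directed cycles of $D$ by Eulerianness. This gives $y$ as an integer combination of the $x^{C_j}$, so the affine constraints above cut out an affine plane of codimension exactly $h$ containing $V_O\cap V_\lambda$, establishing the first assertion.

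For the ``in particular'' clause, codimension one forces $h\leq 1$, while $\lambda\neq O$ forces $h\geq 1$; hence $h=1$. Since any Eulerian orientation decomposes into edge-disjoint directed cycles, $g(H)=1$ forces $D$ to be a single directed cycle $C$, and therefore $\lambda=x^C$. The main technical point is the spanning claim for directed cycles of an Eulerian orientation, which I handle via the flow-decomposition and shift trick just described; everything else is a transparent manipulation of the two defining Voronoi inequalities.
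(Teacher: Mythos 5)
Your proof is correct, and it reaches the same pair of facts as the paper -- namely that $2\langle x,x^C\rangle = q(x^C)$ holds for every directed circuit $C\subseteq D=\supp(\lambda)$, and that these constraints cut out a plane of codimension $g(D)$ -- but by a noticeably different route for each half. To obtain the per-circuit equality, the paper decomposes $\lambda$ into an orthogonal sum $\lambda=\lambda_1+\dots+\lambda_k$ of circuit flows, applies the $V_O$-inequality to each summand, and observes that equality must hold termwise because the sum is pinned at $q(\lambda)$; varying the decomposition then yields the equality for an arbitrary circuit. You instead use the complementary lattice element $\lambda-x^C$, which is the Eulerian flow of the Eulerian orientation $D\setminus C$, and squeeze $2\langle x,x^C\rangle$ from both sides: the $V_O$-inequality against $x^C$ gives the upper bound, and the $V_\lambda$-inequality against $\lambda-x^C$, after expanding and using $\langle\lambda,x^C\rangle=q(x^C)=|C|$, gives the matching lower bound. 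This is a genuinely different mechanism (complementation rather than orthogonal decomposition), arguably a little slicker, and it avoids needing the decomposition machinery at all. For the codimension count, the paper simply picks a cycle basis $C_1,\dots,C_{g_\lambda}$ of $\supp(\lambda)$ consisting of directed circuits of $D$ and invokes their independence, tacitly relying on the fact that such a basis exists; you make that spanning claim explicit and prove it by the shift-and-decompose trick ($y+Nx^D$ becomes nonnegative on $D$ for $N$ large, then decompose into directed circuits). So your argument fills in a step the paper leaves implicit. The handling of the ``in particular'' clause (genus $1$ Eulerian forces a single circuit) matches the paper's intent. One minor remark: the sentence about consistency of the affine system (witnessed by $\lambda/2$) is unnecessary, since containment of the nonempty set $V_O\cap V_\lambda$ already guarantees the system is satisfiable; but it does no harm.
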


\begin{proof} A point $x$ in the intersection $V_O\cap V_\lambda$  has the following properties:
\begin{itemize}
\item[I.] $q(x) = q(x-\lambda)$ or equivalently $2\langle x,\lambda\rangle = q(\lambda)$, and
\item[II.] for all $\mu \in \Lambda$,  $2\langle x,\mu\rangle \leq q(\mu)$. 
\end{itemize}
By Lemma~\ref{lem:eulerian}, $\lambda$ is Eulerian; $\supp(\lambda)$ is thus an Eulerian oriented graph and can be decomposed into a disjoint union of  oriented cycles. In other words, there exist Eulerian $\lambda_1,\dots,\lambda_{k} \in \Lambda$, for some $k \in \mathbb N$, such that 
\begin{itemize}
\item[$(i)$] for $1\leq i\leq k$, $\supp(\lambda_i)$ is a circuit;
\item[$(ii)$] $\lambda = \lambda_1 + \dots+\lambda_{k}$; and
\item[$(iii)$] for all $1\leq i \neq j \leq k$, $\langle \lambda_i,\lambda_j\rangle = 0$.
\end{itemize}
In addition, for $\lambda_i$ one may choose any oriented circuit $C$ of $\supp(\lambda)$ and define $\lambda_1 = x^C$. 

\noindent (To see this, note that one way to obtain the decomposition is to proceed greedily, and in each step, choose an arbitrary circuit in the remaining oriented graph, delete it and proceed.)

Applying Property II above to each $\lambda_i$  and using $(ii)$ and $(iii)$ above, one sees that 
\begin{align*}\label{eq:plane}
2\langle x,\lambda_i\rangle = q(\lambda_i),\qquad \textrm{for all}\,\, i \in \{1,\dots,k\}.   
\end{align*}
Since there exists a decomposition such that $\lambda_1 = x^C$ for an arbitrary circuit of $\supp(\lambda)$, we infer that 
\begin{align*}
\hspace{2.5cm} \textit{For every circuit} \,\,\, C \subset \supp(\lambda),\qquad 2\langle x,x^C\rangle = q(x^C).\hspace{3.6cm} (*)
\end{align*}

\hspace{.3cm}

\noindent  Let $C_1,\dots,C_{g_\lambda}$ be a basis for the cycle space of $\supp(\mu)$ and let $P_\lambda$ be the affine plane defined by the $g_\lambda$ equations $(*)$ for $x^{C_i}$, for $i=1,\dots,g_\lambda$. Here $g_{\lambda}$ is the genus of $\supp(\lambda)$.
 Since the elements $x^{C_i}$ are independent, and the inner product $\langle\,,\rangle$ is non-degenerate,  $P_\lambda$ has codimension $g_\lambda$ and it contains $V_O\cap V_\lambda$. The lemma follows. 
\end{proof}

\begin{proposition}\label{prop:decomp}
Let $\Eu$ be the set of all Eulerian elements of $\Lambda$ of genus $1$.  $\Eu$ forms a system of generates of $\Lambda$.   The one-skeleton of the Delaunay dual of $\Vor_q$ is isomorphic to $\Cay(\Lambda,\Eu)$. 
\end{proposition}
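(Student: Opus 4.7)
The plan is to establish the two assertions of the proposition in turn. For the generation claim, I would invoke the classical fact that $\Lambda = H_1(G,\mathbb Z)$ admits an integral basis of fundamental circuit flows: after fixing a spanning tree $T$ of $G$, each non-tree edge $e \notin T$ closes a unique circuit $C_e \subset T \cup \{e\}$, and the flows $\{x^{C_e}\}_{e \notin T}$ form a $\mathbb Z$-basis of $\Lambda$. Since each $x^{C_e}$ is an Eulerian flow of genus one and hence lies in $\Eu$, this already gives that $\Eu$ generates $\Lambda$ as an abelian group.

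For the identification of the Delaunay one-skeleton with $\Cay(\Lambda,\Eu)$, by translation-invariance of the Voronoi diagram it suffices to determine the Delaunay neighbors of the origin, i.e., the vectors $\nu \in \Lambda\setminus\{0\}$ with $V_O \cap V_\nu$ a facet of $V_O$. The forward implication is immediate from Lemma~\ref{lem:codim}: if $V_O \cap V_\nu$ has codimension one, then $\supp(\nu)$ is an oriented circuit $C$ and $\nu = x^C \in \Eu$.

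For the converse, I would fix an arbitrary oriented circuit $C$ and show that $\nu := x^C$ is a \emph{Voronoi-relevant} vector, using the classical criterion that $\nu$ is relevant (equivalently, $V_O \cap V_\nu$ is a facet of $V_O$) if and only if $\pm\nu$ are the unique minimum-$q$ elements of the coset $\nu + 2\Lambda$. Expanding $q(\nu + 2\mu) - q(\nu) = 4\langle \nu,\mu\rangle + 4 q(\mu)$, this reduces to showing that $\langle x^C,\mu\rangle + q(\mu) \geq 0$ for every $\mu \in \Lambda$, with equality only for $\mu \in \{0,-\nu\}$. Choosing reference orientations so that every edge of $C$ is oriented along $C$, the left-hand side splits as
\[
\sum_{e \in C} \mu_e(\mu_e+1) \;+\; \sum_{e \notin C} \mu_e^2,
\]
each term of which is a non-negative integer since $k(k+1)\geq 0$ for $k\in\mathbb Z$. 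Equality forces $\mu$ to vanish off $C$ and to take values in $\{0,-1\}$ on $C$; flow conservation at each vertex of the cycle $C$ then forces these values to be constant along $C$, giving $\mu \in \{0,-x^C\}$, as required.

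The main technical obstacle is the converse direction just sketched: Lemma~\ref{lem:codim} dispatches the forward implication cleanly, but showing that every circuit flow actually realises a facet (rather than some higher-codimensional face) requires both the appeal to Voronoi's relevant-vector criterion and the integer-combinatorial inequality above. Once this step is in place, the graph isomorphism between the Delaunay one-skeleton and $\Cay(\Lambda,\Eu)$ is purely formal.
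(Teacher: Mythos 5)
Your proof is correct, and it takes a genuinely different route from the paper. For the generation claim the paper does not invoke a spanning-tree basis: it takes an arbitrary $\mu\in\Lambda$, blows up each arc $e\in\supp(\mu)$ into $\mu_e$ parallel arcs to obtain an Eulerian multigraph, decomposes that multigraph into arc-disjoint circuits, and reads these off as circuits $C_1,\dots,C_k\subseteq\supp(\mu)$ with $\mu=\sum_i x^{C_i}$ and $\langle x^{C_i},x^{C_j}\rangle\geq 0$. This decomposition (with supports inside $\supp(\mu)$) is not just a device for this proposition; it is reused verbatim in the proofs of Lemma~\ref{lem:cle} and Lemma~\ref{lem:cle2}, so the paper gets more mileage out of it than a spanning-tree basis would provide. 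For the Cayley-graph identification, the paper is in fact rather terse: after Lemma~\ref{lem:codim} gives the inclusion of Delaunay edges into $\Eu$, it asserts that the decomposition ``clearly proves the proposition,'' without explicitly showing the reverse inclusion, i.e.\ that \emph{every} circuit flow $x^C$ cuts a genuine facet of $V_O$ rather than a higher-codimension face. Your argument supplies exactly this missing direction, via Voronoi's facet criterion and the clean integral inequality
\[
\langle x^C,\mu\rangle+q(\mu)\;=\;\sum_{e\in C}\mu_e(\mu_e+1)+\sum_{e\notin C}\mu_e^2\;\geq\;0,
\]
together with the flow-conservation step showing equality forces $\mu\in\{0,-x^C\}$. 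So your proof is both correct and, on the facet-realisation point, more explicit than the paper's; the trade-off is that you do not produce the support-respecting decomposition that the paper needs again later.
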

\begin{proof}
Let $\mu \in \Lambda$. We show the existence of  $\lambda_1,\dots,\lambda_k$ such that each $\lambda_i$ is of the form $x^{C_i}$ for a circuit $C_i$ and in addition $\mu = \sum_i \lambda_i$ (and in addition $C_i \subset \supp(\lambda)$). This clearly proves the proposition.  

\noindent Let $D$ be the Eulerian oriented graph obtained by replacing each arc $e$ in $\supp(\mu)$ with $\lambda_e$ different parallel arcs. Since $D$ is Eulerian, it admits a decomposition into $k$ circuits, for some $k\in \mathbb N$. Looking at the circuits in the original oriented graph $\supp(\mu)$, one obtains $k$ circuits $C_1,\dots,C_k \subset \supp(\mu)$, and it is clear that $\mu = \sum_1^k \lambda_i$ for $\lambda_i =x^{C_i}$. 
\end{proof}

\paragraph*{Hyperplane Arrangement Defined by Circuits} Let $\Eu$ be the set of all the Eulerian elements of $\Lambda$ of genus one,  and let $\lambda$ be an element of $\Xi_1$. Let $\H_\lambda$ be the hyperplane in $\H$ defined by the equation 
$$\H_\lambda:=\bigl\{\,x\in \H\,|\, 2\langle x,x^\lambda  \rangle = q(\lambda)\,\bigr\}.$$
More generally, for an arbitrary lattice point $\mu \in \Lambda$ and an Eulerian  $\lambda \in \Lambda$ of genus one, let $\H_{\mu,\lambda}$ be the affine hyperplane $\mu+\H_\lambda$. By the results of the previous section, the Voronoi diagram $\Vor_q$ is the hyperplane arrangement of all the hyperplanes $\H_{\mu,\lambda}$ for $\mu \in \Lambda$ and $\lambda \in \Xi_1$. In addition, the Voronoi cell $V_O$ (and more generally $V_\mu$) is the cell containing $O$ (resp. $\mu$) in the hyperplane arrangement of the hyperplanes $\H_\lambda$ (resp. $\H_{\mu,\lambda}$) for $\lambda \in \Eu$.

\subsection{Definition of the map $\phi$ from $\FP$ To $\SC$} Each face $F$ in $\FP$ is obtained as an intersection of some of the hyperplanes $\H_\lambda$ for $\lambda \in \Eu$. 

\begin{lemma}\label{lem:cle}
Let $F\in \FP$ be a face of $V_O$, and let $\lambda$ and $\mu$ be two different elements of $\Eu$ such that $F \subset \H_\lambda \cap \H_\mu$. Then $\supp(\lambda)$ and $\supp(\mu)$ are consistent in their orientations: i.e., there is no oriented edge $e \in \mathbb E$ such that $e \in \supp(\lambda)$ and $\bar e \in \supp(\mu)$.
\end{lemma}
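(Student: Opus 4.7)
I would argue by contradiction: suppose there is an oriented edge $e$ with $e \in \supp(\lambda)$ and $\bar e \in \supp(\mu)$, and derive a numerical contradiction from the Voronoi constraints. Pick $x$ in the relative interior of $F$, so that $x \in V_O$ and the two hyperplane identities $2\langle x,\lambda\rangle = q(\lambda)$, $2\langle x,\mu\rangle = q(\mu)$ hold.

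The key move is to apply Proposition~\ref{prop:decomp} to the lattice point $\nu := \lambda + \mu$: one obtains a decomposition $\nu = \sum_{i=1}^{k} x^{C_i}$ with each $C_i$ a circuit contained in $\supp(\nu)$. Since $x \in V_O$ and each $x^{C_i}$ lies in $\Lambda$, every term satisfies $2\langle x, x^{C_i}\rangle \leq q(x^{C_i}) = |C_i|$; summing over $i$ and using the two hyperplane identities on the left-hand side gives
\begin{align*}
q(\lambda) + q(\mu) \;=\; 2\langle x,\nu\rangle \;\leq\; \sum_{i=1}^{k} |C_i|.
\end{align*}
Because $\supp(\nu)$ carries a consistent orientation, every circuit $C_i$ uses its arcs with positive sign and each edge $e \in \supp(\nu)$ occurs in exactly $\nu_e$ of the circuits, so $\sum_i |C_i| = \sum_{e \in \supp(\nu)} \nu_e$.

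The last step is to evaluate $\sum_{e \in \supp(\nu)} \nu_e$ by classifying each edge of $G$ according to its membership in $\supp(\lambda)$ and $\supp(\mu)$. The essential observation is that any pair $e, \bar e$ with $e \in \supp(\lambda)$, $\bar e \in \supp(\mu)$ cancels in $\nu = \lambda + \mu$ and contributes $0$, while the remaining edges contribute their full $\lambda$- and $\mu$-values. A direct count yields
\begin{align*}
\sum_{e \in \supp(\nu)} \nu_e \;=\; q(\lambda) + q(\mu) - 2\,|E_-|,
\end{align*}
with $E_- := \{e \in \mathbb{E} : e \in \supp(\lambda),\; \bar e \in \supp(\mu)\}$. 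Combined with the previous inequality, this forces $|E_-| \leq 0$, hence $E_- = \emptyset$, which is exactly the consistency claim. The only slightly delicate point is the identity $\sum_i |C_i| = \sum_{e \in \supp(\nu)} \nu_e$, which relies on each $C_i$ inheriting the consistent orientation of $\supp(\nu)$ so that no sign cancellation occurs when summing the $x^{C_i}$; once that is granted, the remaining work is elementary edge bookkeeping.
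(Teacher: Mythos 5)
Your proof is correct and takes essentially the same route as the paper's: both decompose $\lambda+\mu$ into circuits via Proposition~\ref{prop:decomp}, sum the Voronoi inequalities $2\langle x,\gamma_i\rangle\leq q(\gamma_i)$, and play the result against the hyperplane identities, with the cancellation of $e,\bar e$ supplying the strict deficit. The only cosmetic difference is that you aggregate all the constraints into the single bookkeeping identity $\sum_{e\in\supp(\nu)}\nu_e = q(\lambda)+q(\mu)-2|E_-|$ to conclude $|E_-|=0$ directly, whereas the paper phrases the same count as forcing some $\gamma_i$ to violate $x\in V_O$.
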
 

\begin{proof}
Let $C_\lambda = \supp(\lambda)$ and $C_\mu = \supp(\mu)$, and for the sake of a contradiction, suppose there exists an oriented edge $e\in \mathbb E$ with $e \in C_\lambda$ and $\bar e \in C_\mu$.  We show that for every point $x$  in $V_O \cap \H_\lambda \cap \H_\mu$, there exists an element $\gamma \in \Eu$ which is strictly closer to $x$. This will show that the intersection $V_O \cap \H_\lambda \cap \H_\mu$ is empty, which will be a contradiction, and thus, the lemma follows. 

Let $x \in V_O \cap \H_\lambda \cap H_\mu$. We have 
\begin{equation*}
2\langle x,\lambda  \rangle = q(\lambda)=\ell(C_\lambda) \qquad \textrm{and} \qquad 2\langle x,\mu  \rangle = q(\mu) = \ell(C_\mu).
\end{equation*}
Thus, 
\begin{equation}\label{eq1}
2\langle x, \lambda + \mu  \rangle = \ell(C_\lambda) + \ell(C_\mu).
\end{equation}
Let $D = \supp (\lambda+\mu)$. Observe that neither $e$ nor $\bar e$ belong to $D$ and so the sum $\sum_{e\in D} \lambda_e+\mu_e$ is strictly less than $\ell(C_\lambda) + \ell(C_\mu)$. By (the proof of) Proposition~\ref{prop:decomp}, $\lambda+\mu$ can be written as a sum $\gamma_1+\dots+\gamma_k$ such that $\gamma_i \in \Eu$ and $\supp(\gamma_i) \subset D$. This shows that 
\begin{equation}\label{eq2}
\sum_i \ell(\supp(\gamma_i)) = \sum_{e\in D} \lambda_e+\mu_e < \ell(C_\lambda) + \ell(C_\mu).
\end{equation}
If for all $i$, $2\langle x,\gamma_i \rangle \leq q(\gamma_i) = \ell(\supp(\gamma_i))$, then by summing up all these inequalities and applying Inequality~\ref{eq2}, we should have
\begin{equation}
2\langle x,\lambda+\mu\rangle = \sum_i  2\langle x,\gamma_i \rangle  \leq \sum_i \ell(\supp(\gamma_i)) < \ell(C_\lambda) +\ell(C_\mu), 
\end{equation}
which is impossible by Equation~\ref{eq1}. Thus, there exists an $i$ such that $2\langle x,\gamma_i \rangle > q(\gamma_i)$. And clearly, in this case $q(x) > q(x-\gamma_i)$, contradicting the assumption that $x \in V_O$. 
\end{proof}

\paragraph*{Definition of the injective map $\phi$ from $\FP$ to $\SC$. }Applying Lemma~\ref{lem:cle}, we now construct a map $\phi:\FP \rightarrow \SC$ as follows. Let $F \in \FP$, and consider the family $\mathcal U_F = \{\lambda\in \Eu\,|\, F \subset \H_\lambda\}$. Define $\phi(F) = \bigcup_{\lambda \in \mathcal U_F} \supp(\lambda)$. By Lemma~\ref{lem:cle}, two different $\lambda$ and $\mu$ are consistent in their orientations, so the union in the definition of $\phi(F)$ defines a well-defined orientation of a subgraph of $G$. In addition, it is quite straightforward to check that $\phi(F)$ is a strongly connected orientation of that subgraph, i.e., $\phi(F)$ lies in $\SC$. 

\begin{lemma}\label{lem:cle2}
Let $F$ be a face in $\FP$, and $\mathcal U_F$ and $\phi(F)$ be defined as above. For any $\lambda\in \Eu$ with $\supp(\lambda) \subset \phi(F)$, we have $\lambda \in \mathcal U_F$, i.e., $F \subset \H_\lambda$. More precisely, $\lambda \in \mathcal U_F$ if and only if $\supp(\lambda) \subset \phi(F)$. 
\end{lemma}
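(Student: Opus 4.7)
The forward implication ($\lambda \in \mathcal U_F$ implies $\supp(\lambda) \subset \phi(F)$) is immediate from the definition $\phi(F) = \bigcup_{\mu \in \mathcal U_F} \supp(\mu)$, so the content of the lemma lies in the reverse direction. My plan is to adapt the strategy of Lemma~\ref{lem:cle}: assemble an auxiliary integer flow from the elements of $\mathcal U_F$, subtract $\lambda$, and apply Proposition~\ref{prop:decomp} to the result in order to compare inner products against $x$ with norms.

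Fix $\lambda \in \Eu$ with $C := \supp(\lambda) \subset \phi(F)$ and $x \in F$. Set $M := \sum_{\mu \in \mathcal U_F} \mu$, a finite sum since $\Eu$ is finite. By Lemma~\ref{lem:cle} the orientation of $\phi(F)$ is consistent, so every $\mu \in \mathcal U_F$ has $\mu_e \in \{0,1\}$ on each oriented edge $e$ of $\phi(F)$, and $M$ is a pointwise non-negative integer flow with value $m_e := |\{\mu \in \mathcal U_F : e \in \supp(\mu)\}| \geq 1$ on each $e \in \phi(F)$. Since $C \subset \phi(F)$ with the same orientation, $M - \lambda$ is again a non-negative integer flow supported in $\phi(F)$, and Proposition~\ref{prop:decomp} yields a decomposition $M - \lambda = \gamma_1 + \cdots + \gamma_r$ with $\gamma_i \in \Eu$ and $\supp(\gamma_i) \subset \phi(F)$.

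The key numerical identity is $\sum_\mu q(\mu) - \sum_i q(\gamma_i) = q(\lambda)$, which is a straightforward edge count with multiplicities: each oriented edge $e \in \phi(F)$ contributes $m_e$ to the first sum and $m_e - [e \in C]$ to the second. Combining this with the equalities $2\langle x, \mu\rangle = q(\mu)$ available for each $\mu \in \mathcal U_F$ (since $F \subset \H_\mu$) and the Voronoi inequalities $2\langle x, \gamma_i\rangle \leq q(\gamma_i)$ (since $x \in V_O$), I obtain
\[
2\langle x, \lambda\rangle \,=\, 2\langle x, M\rangle - 2\langle x, M-\lambda\rangle \,\geq\, \sum_\mu q(\mu) - \sum_i q(\gamma_i) \,=\, q(\lambda).
\]
The opposite inequality $2\langle x, \lambda\rangle \leq q(\lambda)$ is the Voronoi condition $x \in V_O$ applied to $\lambda$ itself, forcing equality and hence $x \in \H_\lambda$. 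Since $x$ was an arbitrary point of $F$, this gives $F \subset \H_\lambda$ and $\lambda \in \mathcal U_F$.

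The only step that requires any delicacy is the length identity above; everything else is an immediate recombination of the equalities and inequalities already active on the face $F$ of $V_O$. The success of the whole argument relies on the consistent-orientation property of $\phi(F)$ supplied by Lemma~\ref{lem:cle}, without which $M - \lambda$ would not be a non-negative flow and Proposition~\ref{prop:decomp} could not be applied to compare lengths edgewise.
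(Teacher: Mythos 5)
Your proof is correct and follows essentially the same approach as the paper's: both build the auxiliary flow $M = \sum_{\mu \in \mathcal U_F} \mu$, decompose it into circuits supported in $\phi(F)$, and compare total circuit lengths against the face equalities $2\langle x,\mu\rangle = q(\mu)$ and the Voronoi inequalities $2\langle x,\gamma_i\rangle \leq q(\gamma_i)$. Where the paper phrases the decomposition as one of $M$ with $\lambda$ taken as the first piece, you decompose $M-\lambda$ directly; these are the same step, and your formulation has the small merit of making explicit why such a decomposition with $\lambda$ in front exists.
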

\begin{proof}Define the flow $\mu = \sum_{\gamma \in \mathcal U_F} \gamma$. By applying Lemma~\ref{lem:cle} and the definition of $\mathcal U_F$, we have
\begin{equation}\label{eq3}
\forall x \in F \qquad  2\langle x,\mu\rangle = \sum_{\gamma \in \mathcal U_F} \ell(\supp(\gamma)),
\end{equation}
and since $F \subset V_O$, we have
\begin{equation}\label{eq4}
\forall x\in F, \gamma \in \Eu, \qquad 2\langle x,\gamma\rangle \leq q(\gamma)=\ell(\supp(\gamma)).
\end{equation}
As the prove of Proposition~\ref{prop:decomp} and Lemma~\ref{lem:codim} shows, since $\supp(\lambda) \subset \supp(\mu)  =\phi(F)$, there exists another decomposition of $\mu=\sum_{i=}^k \gamma'_i$ with $\gamma_1'=\lambda$ and such that $\supp(\gamma_i')\subset \supp(\mu)$. This shows that $\sum_{i=1}^k \ell(\supp(\gamma'_i)) = \sum_{\gamma \in \mathcal U_F} \ell(\supp(\gamma))$.   Equation~\ref{eq3} and Inequalities~\ref{eq4} for $\gamma'_i$ then imply that for all $i \in \{1,\dots,k\}$, $2\langle x,\gamma'_i\rangle = q(\gamma'_i)$, i.e., for all $i$, $\gamma'_i \in \mathcal U_F$, and in particular $\lambda =\gamma'_1 \in \mathcal U_F$.  The second part of the lemma follows from the first part and the definition of $\mathcal U_F$. 
\end{proof}
\begin{lemma}
The map $\phi$ is injective and order-preserving. 
\end{lemma}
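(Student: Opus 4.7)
The plan is to derive both claims directly from Lemmas~\ref{lem:cle} and~\ref{lem:cle2}, together with the description of $V_O$ as the cell of the origin in the hyperplane arrangement $\{\H_\lambda\}_{\lambda\in\Eu}$.

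For order-preservation I would argue as follows. If $F \subseteq F'$ are two faces of $V_O$, then every hyperplane $\H_\lambda$ containing $F'$ automatically contains the subface $F$, so $\mathcal U_{F'} \subseteq \mathcal U_F$. By the very definition of $\phi$, this gives an inclusion $\phi(F') \subseteq \phi(F)$ of oriented subgraphs of $G$ --- the orientations agree on the common edges by Lemma~\ref{lem:cle}. Since $\preceq$ on $\SC$ is defined by reverse inclusion of the underlying oriented subgraph, this reads exactly $\phi(F) \preceq \phi(F')$, so $\phi$ is order-preserving.

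For injectivity, the key observation is that $\mathcal U_F$ already determines $F$. Indeed, $V_O$ is presented as the intersection of the half-spaces $\{x \mid 2\langle x,\lambda\rangle \leq q(\lambda)\}$ for $\lambda\in\Eu$, so by standard polytope theory
$$F \;=\; V_O \cap \bigcap_{\lambda\in\mathcal U_F}\H_\lambda,$$
and distinct faces of $V_O$ have distinct tight sets $\mathcal U$. On the other hand, Lemma~\ref{lem:cle2} recovers $\mathcal U_F$ intrinsically from its image, as $\mathcal U_F = \{\lambda\in\Eu \mid \supp(\lambda)\subset \phi(F)\}$. Chaining the two reconstructions, $\phi(F)=\phi(F')$ forces $\mathcal U_F=\mathcal U_{F'}$ and hence $F=F'$.

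The only step that requires a bit of care is the polytopal claim that every face of $V_O$ is cut out inside $V_O$ by a subfamily of the hyperplanes $\{\H_\lambda\}_{\lambda\in\Eu}$, i.e.\ that $V_O$ has no ``extra'' supporting hyperplanes. This is built into the earlier identification of the Voronoi decomposition with the hyperplane arrangement of the $\H_{\mu,\lambda}$'s, which in turn rests on Proposition~\ref{prop:decomp} showing that $\Eu$ generates $\Lambda$. Beyond that routine bookkeeping --- ``smaller face = more tight hyperplanes = larger oriented subgraph = larger in $\SC$'' --- I do not expect any serious obstacle; the argument is essentially an orchestration of the material already in place.
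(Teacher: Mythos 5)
Your proposal is correct and follows the same approach as the paper: injectivity comes from recovering $\mathcal U_F$ from $\phi(F)$ via Lemma~\ref{lem:cle2} together with the identification of $V_O$ as a cell in the hyperplane arrangement $\{\H_\lambda\}_{\lambda\in\Eu}$, and order-preservation comes from the reverse inclusion of the tight sets $\mathcal U$. Your version merely spells out the standard polytope fact that the paper leaves implicit, but the argument is the same.
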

\begin{proof}
Injectivity follows easily from the previous lemma: if for two faces $F_1$ and $F_2$ we have $\phi(F_1) = \phi(F_2)$, then $\mathcal U_{F_1} = \mathcal U_{F_2}$. Since $V_O$ is the cell containing the origin in the arrangement of the hyperplanes $\H_\lambda$ for $\lambda \in \Eu$, this shows that $F_1 = F_2$. 

\noindent To show that the map $\phi$ is order preserving, note that if $F_1 \subseteq F_2$ for two faces in $\FP$, then $\mathcal U_{F_2} \subseteq \mathcal U_{F_1}$, thus, $\phi(F_2) =  \bigcup_{\lambda \in \mathcal U_{F_2}} \supp(\lambda) \subseteq \bigcup_{\lambda \in \mathcal U_{F_1}}  \supp(\lambda)=\phi(F_1)$ and so by the definition of the partial order in $\SC$, we have $\phi(F_1) \preceq \phi(F_2)$.  
\end{proof}
\subsection{Explicit description of the faces in $\FP$}
We now show that the map $\phi$ defined in the previous section is surjective. 

\noindent Let $D$ be a strongly connected orientation of a subgraph  $H$ of $G$. Let $\H_D$ be the affine plane defined as the intersection of all the affine hyperplanes $\H_{\lambda}$ for $\lambda \in \Eu$ and $\supp(\lambda) \subseteq D$. From the analysis we did before, we know that $\codim (\H_D) = g_D$, the genus of $H$. We show that there is a face $F$ of $\FP$ of codimension $g_D$ which is included in $\H_D$ and $\phi(\FP) = D$. This proves the surjectivity of $\phi$ and Theorem~\ref{thm:main} follows. 

In what follows, by the abuse of the notation, by a strongly connected orientation of the whole graph we simply mean a strongly connected orientation of the graph minus the set of bridges. (Thus, if $G$ does not have any bridge, i.e., is 2-edge connected, the orientation is indeed defined on the whole graph.)
\begin{lemma}
If $D$ is a strongly connected orientation of the whole graph $G$, then $\H_D$  consists of precisely a vertex of $V_O$, denoted by $v^D$.  All the vertices of $V_O$ are of this form. 
\end{lemma}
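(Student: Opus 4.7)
The plan is to exhibit the point $v^D$ explicitly, verify that it lies in $\H_D$, check that it lies in the Voronoi cell $V_O$, and then use the map $\phi$ from the previous subsection to identify every vertex of $V_O$ with some $v^D$.

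First, I define $w\in C_1(G,\mathbb R)$ by $w_e=\tfrac{1}{2}$ for every $e\in D$ (so $w_{\bar e}=-\tfrac{1}{2}$) and $w_e=0$ on any bridge of $G$, and let $v^D$ be the orthogonal projection of $w$ onto the subspace $\H$ of flows. The projection identity gives $\langle v^D,\lambda\rangle=\langle w,\lambda\rangle$ for every $\lambda\in\H$; in particular, for any circuit $C$ with $\supp(x^C)\subseteq D$,
$$2\langle v^D,x^C\rangle\;=\;2\langle w,x^C\rangle\;=\;|C|\;=\;q(x^C),$$
so $v^D\in \H_{x^C}$. Since $D$ is a strongly connected orientation of $G$ after removing its bridges, its cycle space coincides with the cycle space of $G$, so Lemma~\ref{lem:codim} gives $\codim\H_D=g_D=g$; as $\dim\H=g$, this already forces $\H_D=\{v^D\}$.

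The substantive step is to verify $v^D\in V_O$, that is, $2\langle v^D,\mu\rangle\le q(\mu)$ for every $\mu\in\Lambda$. Using Proposition~\ref{prop:decomp}, I decompose $\mu=\sum_{i=1}^k x^{C_i}$ with each circuit $C_i\subseteq\supp(\mu)$. For an arbitrary circuit $C$ in $G$ the computation above generalizes to
$$2\langle v^D,x^C\rangle\;=\;|C\cap D|-|C\setminus D|\;\le\;|C|\;=\;q(x^C),$$
where $C\setminus D$ denotes the oriented edges of $C$ not in $D$ (equivalently, those oriented opposite to $D$). Summing these $k$ inequalities, the support constraint $\supp(C_i)\subseteq\supp(\mu)$ prevents cancellation, so $\sum_i|C_i|=\sum_{e\in E}|\mu_e|$; and for integer $\mu$, $|\mu_e|\le \mu_e^2$ gives $\sum_e|\mu_e|\le q(\mu)$, yielding the claim.

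Finally, $v^D$ is a vertex of $V_O$ because it is a single point lying on $g$ linearly independent hyperplanes $\H_{x^{C_i}}$, each active at $v^D$ by the first paragraph and supporting $V_O$ by the second; hence the face of $V_O$ containing $v^D$ has codimension $g$. Conversely, for any vertex $v$ of $V_O$, Lemma~\ref{lem:cle2} identifies the face $\{v\}$ with $V_O\cap \H_{\phi(v)}$; since $v$ is a vertex, $\H_{\phi(v)}$ must be $0$-dimensional, so Lemma~\ref{lem:codim} forces $g_{\phi(v)}=g$. This means the subgraph underlying $\phi(v)$ has maximal genus and thus contains every non-bridge edge of $G$, so $\phi(v)$ is a strongly connected orientation of the whole graph, and the uniqueness $\H_{\phi(v)}=\{v^{\phi(v)}\}$ from the first paragraph gives $v=v^{\phi(v)}$. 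I expect the main obstacle to be the inequality $2\langle v^D,\mu\rangle\le q(\mu)$ in the second paragraph: it bundles together the circuit decomposition of Proposition~\ref{prop:decomp}, the elementary bound $|C\cap D|-|C\setminus D|\le |C|$, and the integer inequality $|n|\le n^2$, and the cleanness of their combination is what delivers the Voronoi condition.
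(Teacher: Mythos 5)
Your proof is correct and reaches the same conclusion, but it takes a somewhat more explicit route than the paper at two points, and it is worth noting the differences. First, you construct $v^D$ concretely as the orthogonal projection onto $\H$ of the half-integral chain $w$ with $w_e = \tfrac{1}{2}$ on $D$, whereas the paper derives the existence of $v^D$ abstractly from the observation that the linear system cutting out $\H_D$ has rank $g_D = g$. Your construction buys you consistency of the system for free, and it also makes the key identity $2\langle v^D, x^C\rangle = |C\cap D| - |C\setminus D|$ transparent for an arbitrary circuit $C$, not just circuits contained in $D$. Second, you verify the Voronoi membership $v^D\in V_O$ directly against all $\mu\in\Lambda$, decomposing $\mu$ into circuits via Proposition~\ref{prop:decomp} and then invoking $\sum_e|\mu_e|\le\sum_e\mu_e^2 = q(\mu)$; the paper instead checks only $\lambda\in\Eu$, tacitly relying on the earlier assertion that $V_O$ is the cell at the origin of the arrangement of the $\H_\lambda$, $\lambda\in\Eu$. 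Unwinding that assertion requires essentially the same $\sum_e|\mu_e|\le q(\mu)$ estimate you wrote out, so your version is arguably the more self-contained of the two. The converse direction (every vertex of $V_O$ is some $v^D$) is in both cases the same short argument through $\phi$, $\mathcal U_{\{v\}}$, and a codimension count forcing the genus of $\phi(\{v\})$ to equal $g$.

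One small point of care in your final paragraph: Lemma~\ref{lem:cle2} gives you $\mathcal U_{\{v\}} = \{\lambda\in\Eu : \supp(\lambda)\subset\phi(\{v\})\}$, hence $\bigcap_{\lambda\in\mathcal U_{\{v\}}}\H_\lambda = \H_{\phi(\{v\})}$, but the equality of faces $\{v\} = V_O\cap\H_{\phi(\{v\})}$ is a separate (easy) polytope fact — namely that a vertex is the intersection of $V_O$ with the supporting hyperplanes active at it, which together must span codimension $g$. You clearly know this; just keep the attribution straight.
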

\begin{proof}
We already know that $\H_D$ is of codimension $g_D = g$. In other words, $\H_D = \{v^D\}$ for a point $v^D \in \H$. We now show that $v^D$ is a vertex of $V_O$.  To this end, we have to show that for all $\lambda \in \Eu$, $q(v^D) \leq q^(v^D -\lambda)$, or equivalently $2\langle v^D,\lambda \rangle \leq q(\lambda) = \ell(\supp(\lambda))$. Let $\lambda \in \Eu$. Since $D$ is a strongly connected orientation of $D$, the circuits of $D$ form a basis for $\Lambda$. There exists thus a set of coefficients $a_\mu$ for $\mu\in \Eu$ with $\supp(\mu) \subset D$ such that 
$$\lambda = \sum_{\mu \in \Eu:\, \supp(\mu) \subset D} a_\mu \, \mu.$$
We have 
\begin{align*}
2\langle v^D,\lambda \rangle&=  \sum_{\mu \in \Eu: \, \supp(\mu) \subset D} 2a_\mu\langle v^D, \mu \rangle\\
&=  \sum_{\mu \in \Eu:\, \supp(\mu) \subset D} a_\mu \ell(\supp(\mu))\\
&= \sum_{e\in D} \Bigl( \sum_{\mu \in \Eu: \, e\in \supp(\mu) \& \supp(\mu) \subset D}  a_\mu\Bigr)= \sum_{e\in D} \lambda_e \\
&\leq \sum_{e\in D} |\lambda_e| = \ell(\supp(\lambda)) = q(\lambda). 
\end{align*}
In addition, the last inequality is strict if and only if $\supp(\lambda) \not\subset D$. 
We infer that $v^D \in V_O$ and it is a vertex of $V_O$. 

To show that a vertex $v$ of $V_O$ is of this form, consider the image of the face $\{v\}$  by  $\phi$. This is a strongly connected orientation of a subgraph $H$ of $G$, and since the intersection $\cap_{\lambda \in \mathcal U_{\{v\}} } H_\lambda $ is of codimension $g$, the genus of $H$ should be equal to that of $G$, i.e., $H =G$ and thus, $v=v^{\phi(\{v\})}$ and the lemma follows. 
\end{proof}

Consider now the general case, $D$ being a strongly connected orientation of a subgraph $H$ of $D$. 
Let $\mathcal S_D$ be the family of all the extensions of the orientation of $H$ to a strongly connected orientation of the whole graph $G$, i.e., the family of all the strongly connected orientations $D'$ of $G$ such that $D \subseteq D'$. 
For each element $D' \in \mathcal S_D$, let $v^{D'}$ the vertex of $V_O$ given by the previous lemma. By the convexity of $V_O$ and $\H_D$, we have $$\conv_{D' \in \mathcal S_D}\langle \, v^{D'}\rangle \subset V_O \cap \H_D.$$ 
\begin{lemma}
The convex-hull of $v^{D'}$ for $D' \in \mathcal S_D$ defines a face $F$ of  $V_O$ and we have $\phi(F) = D$. Thus, the map $\phi$ is surjective. 
\end{lemma}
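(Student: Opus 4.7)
The plan is to set $F := V_O \cap \H_D$ and verify that it is the desired face by simultaneously identifying it with $\conv_{D' \in \mathcal S_D}\langle v^{D'}\rangle$ and computing $\phi(F) = D$. First I want $F$ to be a nonempty face of $V_O$. Nonemptyness reduces to $\mathcal S_D \neq \emptyset$: contracting the edges of $H$ in $G$ produces a graph $G/H$, and strongly connected orientations of $G$ extending $D$ correspond to (arbitrary orientations of loops in $G/H$ together with) strongly connected orientations of the non-bridge part of $G/H$, which exist by Robbins's theorem applied componentwise. Any such extension $D'$ satisfies $v^{D'}\in F$ by the preceding lemma. By construction $F$ is the intersection of $V_O$ with the subfamily $\{\H_\lambda : \lambda\in\Eu,\ \supp(\lambda)\subseteq D\}$ of the defining hyperplanes of $V_O$, hence a face.

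Next I identify the vertices of $F$. Since $F$ is a face of the polytope $V_O$, it is the convex hull of the vertices of $V_O$ contained in it. The preceding lemma tells us that those vertices are precisely the points $v^{D''}$ for $D''$ a strongly connected orientation of the whole graph, and $v^{D''}\in F$ holds iff $v^{D''}\in\H_\lambda$ for every $\lambda\in\Eu$ with $\supp(\lambda)\subseteq D$, i.e.\ (by the equality condition of that lemma) iff $\supp(\lambda)\subseteq D''$ for every such $\lambda$. Since $D$ is strongly connected, every oriented edge of $D$ lies in some circuit $C\subseteq D$, so this condition is equivalent to $D\subseteq D''$, that is $D''\in\mathcal S_D$. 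Consequently $F=\conv_{D'\in\mathcal S_D}\langle v^{D'}\rangle$, as claimed.

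It remains to verify that $\phi(F)=D$. The inclusion $D\subseteq\phi(F)$ is immediate: for any $e\in D$ pick a circuit $C\subseteq D$ through $e$; then $x^C\in\mathcal U_F$ (because $F\subseteq\H_D\subseteq\H_{x^C}$) and $e\in\supp(x^C)\subseteq\phi(F)$. For the converse, let $\lambda\in\mathcal U_F$; then $v^{D'}\in\H_\lambda$ for every $D'\in\mathcal S_D$, so by the preceding lemma $\supp(\lambda)\subseteq\bigcap_{D'\in\mathcal S_D}D'$. The proof is completed by the combinatorial identity $\bigcap_{D'\in\mathcal S_D}D' = D$.

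This last identity is the main obstacle. Its nontrivial direction says that any oriented edge $e\notin D$ is omitted by some extension $D'\in\mathcal S_D$. The cases where the underlying edge of $e$ lies in $H$ (then $\bar e\in D$) or is a bridge of $G$ (then $e$ is oriented by no $D'$) are immediate. For a non-bridge edge $e$ of $G$ not in $H$, I would again pass to the contraction $G/H$: the edge becomes either a loop there (in which case both orientations of $e$ trivially extend to members of $\mathcal S_D$) or a non-bridge edge of $G/H$, and in the latter case I would appeal to the classical fact that in a $2$-edge-connected graph every non-bridge edge can be oriented either way in some strongly connected orientation (start from any such orientation and reverse a directed cycle through $e$). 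Once surjectivity of $\phi$ is established this way, combined with the earlier injectivity and order-preservation, Theorem~\ref{thm:main} follows.
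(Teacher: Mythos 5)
Your proof is correct, and its overall skeleton matches the paper's: identify $F$ with $V_O \cap \H_D$, use the previous lemma to pin down its vertices as exactly the $v^{D'}$ with $D'\in\mathcal S_D$, and then compute $\phi(F)$ by reading off $\mathcal U_F$. The one place where you genuinely diverge is the verification that $\phi(F)\subseteq D$. The paper asserts that if $\supp(\lambda)\not\subset D$ then $2\langle v^{D'},\lambda\rangle < \ell(\supp(\lambda))$ for \emph{every} $D'\in\mathcal S_D$; as stated this is too strong, since a circuit not contained in $D$ can easily lie inside some extension $D'\supseteq D$, and what the argument actually needs is only that \emph{some} $D'$ makes the inequality strict. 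You supply exactly the missing combinatorial fact, namely $\bigcap_{D'\in\mathcal S_D} D' = D$, and prove it via contraction of $H$, Robbins's theorem applied to the $2$-edge-connected blocks of $G/H$, and directed-cycle reversal. This makes the step airtight where the paper is terse to the point of being misstated. You also make explicit that $\mathcal S_D \neq \emptyset$ (hence that $F$ is nonempty), which the paper leaves implicit, and your choice to start from $F := V_O\cap\H_D$ and derive the convex-hull description --- rather than the reverse order of the paper --- is a minor stylistic cleanup. In short: same overall route, but your treatment of $\phi(F)\subseteq D$ is a genuine and welcome correction of a gap in the published argument.
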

\begin{proof}
Let $F= \conv_{D' \in \mathcal S_D}\langle \, v^{D'}\rangle$. We first show that $F$ is a face of $V_O$, the second statement $\phi(F)=D$ easily follows. 
By the previous lemma, any vertex  $v$ of $V_O$ which lies in $\H_D$ is of the form $v^{D'}$ for a strongly connected orientation $D'$ of $G$. Since $\mathcal U_D \subset \mathcal U_{D'}$, and $D$ and $D'$ are strongly connected orientation, we infer that $D \subseteq D'$, i.e., $D'\in \mathcal S_D$, and so all the vertices of $V_O$ which lie in $\H_D$ are of the form $v^{D'}$ for $D'\in \mathcal S_D$. It follows that $F$ is a face of $V_O$.
To show that $\phi(F) =D$, note that for any $\lambda \in \Eu$, by the proof of the previous lemma, we have if $\supp(\lambda) \not\subset D$, then for every $D' \in \mathcal S_D$, $2\langle v^{D'},\lambda \rangle < \ell(\supp(\lambda))$, i.e., $\lambda \notin \mathcal U_F$.  And clearly if $\supp(\lambda) \subset D$, then for every $D' \in \mathcal S_D$, $2\langle v^{D'},\lambda \rangle = \ell(\supp(\lambda))$, and since $F = \conv_{D' \in \mathcal S_D}\langle \, v^{D'}\rangle$, we have $2\langle x,\lambda \rangle = \ell(\supp(\lambda))$ for every $x\in F$, i.e., $\lambda \in \mathcal U_F$. We conclude that $\phi(F) =D$, and the lemma follows. 
\end{proof}
The proof of Theorem~\ref{thm:main} is now complete. 
Note that, as the proof shows, the codimension of a face $F\in \FP$ is equal to the genus of $\phi(F)$.

\section{Quotient posets $\overline \FP$ and $\overline \SC$}   \label{sec:secondpart}
The Voronoi diagram $\Vor_q$ is naturally equipped with the action of $\Lambda$ obtained by translation. (More precisely this is defined as follows: for an element $\lambda \in \Lambda$, and a face $F $ of a Voronoi cell $V_\mu$, $\mu.F := \mu+F$ is the corresponding face of the Voronoi cell $V_{\lambda+\mu}$ under the natural isomorphism obtained by translation). The poset $\overline \FP$ is defined as the quotient of $\Vor_q$ under this action. Another way to define $\overline\FP$ is to start from $\FP$ and identify those faces of $\FP$ which differ by a translation by an element of $\Lambda$.

There is also a quotient poset $\overline \SC$ associated to $\SC$ obtained roughly by identifying those strongly connected orientations  of subgraphs of $G$ which differ by reversing the orientation of an Eulerian orientation of an Eulerian subgraph of $G$.  More precisely, $\overline \SC$ is defined as follows. 

 Let $D \in \SC$ be a strongly connected orientation of a subgraph $H$ of $G$.  The outdegree function $d^+_D:\ V \rightarrow \mathbb N$  is define by sending a vertex $v$ of $G$ to the number of oriented edges emanating from $v$ in the orientated graph $D$, i.e., the number of arcs $e$ in $D$ whose tail is $v$. The equivalence relation $\sim$ is defined on $\SC$ as follows. For two strongly connected orientations $D_1$ of a subgraph $H_1$ and $D_2$ of a subgraph $H_2$ of $G$, we set $D_1\sim D_2$ if an only if 
 \begin{itemize}
 \item $H_1 =H_2$, and
 \item  the two corresponding out-degree functions are equal, i.e., $d^+_{D_1} = d^+_{D_2}$. 
 \end{itemize}
 \begin{lemma}
Two strongly connected orientations $D_1$ and $D_2$ of a subgraph $H$ of $G$ are equivalent if and only there exists an Eulerian subgraph $H_0$ of $H$ and an Eulerian orientation $D_0$ of $H_0$ such that $D_0 \subset D_1$ and $D_2$ is the orientation obtained by replacing each edge $e $ in $D_0$ by $\bar e$, i.e., $\mathbb E(D_2) = \Bigl(\mathbb E(D_1) \setminus \mathbb E(D_0)\Bigr) \cup \bigl\{\bar e\,|\, e \in \mathbb E(D_0)\bigr\}$. 
 \end{lemma}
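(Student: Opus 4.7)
The plan is to prove both implications by a direct tracking of in- and out-degrees at each vertex; the argument is essentially combinatorial bookkeeping, and I do not anticipate any deep obstacle.

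For the \emph{if} direction, suppose $D_0 \subset D_1$ is an Eulerian orientation of an Eulerian subgraph $H_0$ of $H$, and that $D_2$ is obtained from $D_1$ by reversing every oriented edge of $D_0$. Fix $v \in V$ and let $k = d^+_{D_0}(v)$, which equals the in-degree of $D_0$ at $v$ because $D_0$ is Eulerian. Reversing the edges of $D_0$ converts $k$ edges that were outgoing at $v$ in $D_1$ into incoming ones and converts $k$ previously incoming ones into outgoing ones, for a net change of zero in $d^+$. Since the underlying unoriented subgraph is unchanged, $H_1 = H_2 = H$ and $d^+_{D_1}(v) = d^+_{D_2}(v)$ for every $v$, so $D_1 \sim D_2$.

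For the \emph{only if} direction, assume $D_1, D_2 \in \SC$ orient the same subgraph $H$ with $d^+_{D_1} = d^+_{D_2}$. Set
$$D_0 \:=\: \bigl\{\, e \in \mathbb E \,:\, e \in D_1 \textrm{ and } \bar e \in D_2 \,\bigr\},$$
i.e.\ the oriented edges on which $D_1$ and $D_2$ disagree, equipped with the orientation inherited from $D_1$, and let $H_0$ be the underlying subgraph. By construction $D_0 \subset D_1$, and replacing each $e \in D_0$ by $\bar e$ inside $D_1$ produces exactly $D_2$. Thus the only remaining task is to verify that $D_0$ is Eulerian.

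To check this, fix $v \in V$ and classify the edges of $H$ incident to $v$ into four groups according to the pair (orientation at $v$ in $D_1$, orientation at $v$ in $D_2$), of sizes $a, b, c, d$ corresponding to the types (out,out), (out,in), (in,out), (in,in) respectively. The edges of $D_0$ at $v$ are precisely those in the middle two groups; the ones outgoing in $D_0$ at $v$ are counted by $b$, and the incoming ones by $c$. The hypothesis $a + b = d^+_{D_1}(v) = d^+_{D_2}(v) = a + c$ forces $b = c$, so $D_0$ has balanced in- and out-degrees at every vertex. Hence $D_0$ is an Eulerian orientation of an Eulerian subgraph $H_0 \subseteq H$, as required. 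The only mildly delicate point is this four-way classification, and I note that the strong connectedness of $D_1$ and $D_2$ is used only to place both sides of the equivalence inside $\SC$; it plays no role in the degree computation itself.
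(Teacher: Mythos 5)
Your proof is correct and takes essentially the same approach as the paper: the paper's one-line argument is exactly to take $D_0 = \{e \in \mathbb E(D_1) : \bar e \in D_2\}$ and assert that it is Eulerian, which is precisely what your four-way classification verifies in detail.
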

\begin{proof} The proof is quite easy: consider the set of all oriented edges $e \in \mathbb E(D_1)$ such that $\bar e \in D_2$. This defines an Eulerian oriented graph $D_0$ and lemma easily follows. 
 \end{proof}

 The poset $\overline \SC$ is the quotient of $\SC$ obtained by identifying equivalent elements of $\SC$, in other words $\overline SC = \SC /\sim$. Note that this is indeed a poset: if $D_1 \preceq D'_1$, and $D'_1 \sim D'_2$, then by the definition of the partial order $\preceq$ and the lemma above, there exists an Eulerian orientation $D'_0$ of a subgraph of $G$ such that $D'_0 \subseteq D'_1 \subseteq D_1$ and $D'_1$ is obtained from $D_1$ by  reversing the orientation of the oriented edges of $D'_0$. Now reversing the orientation of the oriented edges of $D'_0$ in $D_1$, one obtained an element $D_2 \in \SC$ such that $D_2 \sim D_1$ and $D_2 \preceq D'_2$. So the quotient partial order $\preceq$ is well-defined.  

\begin{theorem}\label{thm:mainq}
The two posets $\overline \SC$ and $\overline \FP$ are isomorphic. 
\end{theorem}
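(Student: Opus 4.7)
The plan is to show that the bijection $\phi:\FP\to\SC$ of Theorem~\ref{thm:main} is compatible with the two quotienting operations and therefore descends to an isomorphism $\bar\phi:\overline{\FP}\to\overline{\SC}$. Everything will hinge on a single \emph{translation law}: whenever $\lambda\in\Sigma(F):=\{\mu\in\Lambda : F\subset V_\mu\}$ (equivalently, when $F-\lambda$ is again a face of $V_O$, in which case $\lambda$ is automatically Eulerian by Lemma~\ref{lem:eulerian}), I claim that
\[
\phi(F-\lambda)\;=\;\bigl(\phi(F)\setminus\supp(\lambda)\bigr)\cup\overline{\supp(\lambda)},
\]
i.e., $\phi(F-\lambda)$ is obtained from $\phi(F)$ by reversing the Eulerian suborientation $\supp(\lambda)\subseteq\phi(F)$. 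The containment $\supp(\lambda)\subseteq\phi(F)$ comes from the decomposition argument of Lemma~\ref{lem:codim} (write $\lambda=\gamma_1+\cdots+\gamma_k$ with each $\gamma_i\in\mathcal U_F$), so the right-hand side really is of the Eulerian-reversal shape that governs $\sim$.

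My main tool for proving the translation law will be the ``canonical point'' $v^D:=\pi_{\H}(\chi^D/2)$ associated to each $D\in\SC$, where $\chi^D\in C^1(G,\mathbb R)$ is the characteristic $\{0,\pm1\}$-cochain of $D$. The key computation is that for every $\mu\in\Lambda$,
\[
2\langle v^D,\mu\rangle\;=\;\sum_{e\in D}\mu(e)\;\leq\;\sum_{e\in E(G)}|\mu(e)|\;\leq\;\sum_{e\in E(G)}\mu(e)^2\;=\;q(\mu),
\]
using $|n|\leq n^2$ for $n\in\mathbb Z$; this shows $v^D\in V_O$, and tracking the equality case identifies the $\gamma\in\Eu$ saturating the inequality as exactly those with $\supp(\gamma)\subseteq D$. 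Combined with Lemma~\ref{lem:cle2}, this places $v^D$ in the relative interior of the face $\phi^{-1}(D)$. The translation law then reduces to a one-line cochain calculation: letting $\tilde D$ denote the proposed right-hand side, $\chi^{\tilde D}-\chi^{\phi(F)}=-2\chi^{\supp(\lambda)}=-2\lambda$ (since $\chi^{\supp(\lambda)}=\lambda$ for Eulerian $\lambda$), hence $v^{\tilde D}=v^{\phi(F)}-\lambda$. Because $v^{\phi(F)}\in\mathrm{relint}(F)$ and $v^{\tilde D}\in V_O$ lies in $\mathrm{relint}(\phi^{-1}(\tilde D))$, the face of $V_O$ through the translated point is simultaneously $F-\lambda$ and $\phi^{-1}(\tilde D)$, so $\phi(F-\lambda)=\tilde D$. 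A small subsidiary point I will need is that $\tilde D$ is still strongly connected, which follows from a cut-balance argument: for any partition $V(G)=A\sqcup B$, the Eulerian $\supp(\lambda)$ crosses $A\to B$ as often as $B\to A$, so the number of $A\to B$ edges is the same in $\tilde D$ as in $\phi(F)$ and therefore positive.

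Granting the translation law, the remainder of the proof is essentially formal. Well-definedness of $\bar\phi$: if $F_1=F_2+\lambda$, the law shows $\phi(F_1)$ is obtained from $\phi(F_2)$ by reversing an Eulerian suborientation, so $\phi(F_1)\sim\phi(F_2)$ by the lemma preceding the theorem. Injectivity: given $\phi(F_1)\sim\phi(F_2)$ via an Eulerian $D_0\subseteq\phi(F_1)$, set $\lambda:=\chi^{D_0}\in\Lambda$; the equality case of the key inequality places $v^{\phi(F_1)}$ on the bisector $\H_\lambda$, so $v^{\phi(F_1)}\in V_\lambda$ and hence $F_1\subseteq V_\lambda$, i.e., $\lambda\in\Sigma(F_1)$; the translation law then gives $\phi(F_1-\lambda)=\phi(F_2)$, and injectivity of $\phi$ on $\FP$ yields $F_2=F_1-\lambda$. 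Surjectivity descends directly from Theorem~\ref{thm:main}, and order-preservation in both directions from its order-preserving nature together with the $\Lambda$-invariance of the inclusion relation on $\FP$ (if $F_1\subseteq F_2$ and $-\mu\in\Sigma(F_2)\subseteq\Sigma(F_1)$, then $F_1+\mu\subseteq F_2+\mu$ in $\FP$). The main obstacle I anticipate is precisely the construction of the canonical points $v^D$ and the verification that they land in the relative interior of the correct face; once that is in hand, the translation law and hence the theorem follow with little further work.
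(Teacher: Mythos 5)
Your proof is correct, and it is the same basic strategy as the paper's: attach a canonical point $v^D$ to each $D\in\SC$, show it sits where it should in the Voronoi cell, and show that translating a face by $\lambda$ corresponds to reversing $\supp(\lambda)$ on the orientation side. The execution, though, is genuinely cleaner at two places. First, you give the canonical point an explicit closed form $v^D=\pi_\H(\chi^D/2)$ valid for \emph{every} $D\in\SC$, whereas the paper only defines $v^D$ implicitly (by the equations $2\langle v^D,x^C\rangle=\ell(C)$) and only for full strongly connected orientations, so it must repeatedly reconstitute faces as convex hulls of these vertices and track the translation $v^D\mapsto v^D+\lambda$ for all $D\in\mathcal S_{D_2}$. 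Your choice to work with the relative-interior witness $v^{\phi(F)}$ of an arbitrary face $F$ sidesteps that layer entirely. Second, the chain $2\langle v^D,\mu\rangle=\sum_{e\in D}\mu_e\le\sum_{e}|\mu_e|\le\sum_e\mu_e^2=q(\mu)$ with the integer inequality $|n|\le n^2$ both proves $v^D\in V_O$ and, via its equality case, identifies $\mathcal U_{\phi^{-1}(D)}=\{\gamma\in\Eu:\supp\gamma\subseteq D\}$ in one stroke; the paper reaches the same information through the identity $2\langle v^D,\mu\rangle=\ell^+(\supp\mu)-\ell^-(\supp\mu)$ and a separate argument that $\supp(\lambda)\subseteq\phi(F_2)$. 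With the explicit formula in hand, your translation law really is the one-line cochain computation $\chi^{\tilde D}-\chi^{\phi(F)}=-2\lambda$, and the descent to $\overline\FP\to\overline\SC$ together with injectivity (via $\lambda=\chi^{D_0}$ and the equality case) is formal, exactly as you say. The auxiliary cut-balance observation for strong connectivity of $\tilde D$ and the appeal to Lemma~\ref{lem:codim} for $\supp(\lambda)\subseteq\phi(F)$ are both sound. In short: same road, but you found a shorter lane.
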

\begin{proof}
The proof follows from the proof of our Theorem~\ref{thm:main} and Lemma~\ref{lem:eulerian}. Namely, consider the bijective map $\phi$ defined in the previous section. Let $F_1$ and $F_2$ be two faces in $\FP$ which are identified in $\FP$, i.e., there exists an element $\lambda \in \Lambda$ such that $F_2 = F_1 +\lambda$. Since $F_2$ is also a face of $V_\lambda$, we infer that $V_\lambda \cap V_O \neq \emptyset$, and by Lemma~\ref{lem:eulerian}, $\lambda$ is Eulerian. Let $D_0 = \supp(\lambda)$, note that $D$ is Eulerian.  We show that $D_0 \subset \phi(F_2)$ and $\phi(F_1)$ is obtained by reversing the orientation of $D_0$ in $\phi(F_2)$, from this we obtain a well-defined quotient map $\overline \phi$, which becomes automatically bijective, and the theorem follows. 

By the definition of $\phi$, we have to show that for every $\mu \in \Eu$ with $\supp(\mu)$ an oriented cycle in $D_0$, we have $\mu \in \mathcal U_{F_2}$. Let $\mu \in \Eu$ be such that  $\supp(\mu) \subset D_0$.   Since $F_1$ is a face of $V_O$, for all $x \in F_1$, we have $-2\langle x, \mu \rangle =2\langle x, -\mu \rangle \leq q(-\mu ) =q(\mu)= \ell(\supp( \mu) )$. It follows that 
\begin{align*}
2\langle x+\lambda, \mu \rangle &= 2\langle x, \mu \rangle + 2 \ell(\supp(\mu)) \qquad \textrm{since $\supp(\mu) \subseteq \supp(\lambda)$}\\ 
&\geq \ell(\supp(\mu)) =q(\mu). 
\end{align*}
Since $x+\lambda \in F_2$, and $F_2$ is a face of $V_O$, the above inequality is indeed an equality for all $x\in F_1$, or equivalently for all $x+\lambda \in F_2$, i.e., for all $y \in F_2$, $2\langle y, \mu \rangle = q(\supp(\mu))$. Thus, $\mu \in \mathcal U_{F_2}$. We infer that $D_0=\supp(\lambda) \subset \phi(F_2)$. 

Let $D_1 = \phi(F_1)$, $D_2 = \phi(F_2)$ and $D'_1$ the element of $\SC$ obtained from $D_2$ by reversing the orientation of every oriented edge in $D_0 \subset D_2$. We aim to show that $D'_1 =D_1$.  

In the proof of Theorem~\ref{thm:main}, we show that $F_2$ (resp. $F_1$) is the convex hull of all the vertices $v^D$ of the Voronoi cell $V_0$ for $D \in \mathcal S_{D_2}$ (resp. $D \in \mathcal S_{D_1}$), i.e., for $D$ being a strongly connected orientation of $G$ which contains  $D_2$ (resp. $D_1$) as an oriented subgraph.   On the other hand, since $F_1 = F_2+\lambda$, we have $\conv_{D \in \mathcal S_{D_2}}(v^D+\lambda) = F_1$. We now show that the set $\{v^D+\lambda\}$ coincides with $\mathcal S_{D'_1}$, i.e., with the set of all  the strongly connected orientations of $G$ which contains $D'_1$ as an oriented subgraph. From this, and the proof of Theorem~\ref{thm:main},  it easily follows that $D'_1=D_1$. 

Recall that $v^D$ has the property that for every $\mu \in \Eu$ with $\supp(\mu) \subset D$, 
$$2\langle v^D,\mu\rangle =\ell(\supp(\mu)).$$ 
Since $D$ is in $\SC$, the set of $\mu$'s with $\supp(\mu) \subset D$ generate $\Lambda$. And since the above equations are all linear, we easily have for all $\mu \in \Eu$ (not only $\mu$ with $\supp(\mu) \subset D$),
$$2\langle v^D,\mu\rangle =\ell^+(\supp(\mu)) - \ell^-(\supp(\mu)),$$
where $\ell^+(\supp(\mu))$ (resp. $\ell^-(\supp(\mu))$) is the number of edges $e$ of $\supp(\mu)$ with $e\in \mathbb E(D)$ (resp. with $\bar e \in \mathbb E(D)$). From this, we have for all $\mu \in \Eu$, 
\begin{align}\label{eq:quotient}
2\langle v^D+\lambda, \mu \rangle = \ell^+(\supp(\mu)) - \ell^-(\supp(\mu)) + 2\langle \lambda, \mu \rangle
\end{align} 
We claim that $v^D+\lambda$ is equal to $v^{D'}$ for the orientation $D'$ obtained by reversing the orientation of $D_0$ in $D$ (note that $D_0 \subset \phi(F_2) =D_2 \subset D$). For this, we have to show that for all $\mu \in \Eu$ with $\supp(\mu) \subset D'$,  we have $2\langle v^D+\lambda, \mu \rangle = \ell(\supp(\mu))$. But this easily follows from the Equation~\ref{eq:quotient} above. Indeed, for such a $\mu$ we have
\begin{align*}
2\langle v^D+\lambda, \mu \rangle &= \ell^+(\supp(\mu)) - \ell^-(\supp(\mu)) + 2\langle \lambda, \mu \rangle\\
&= \ell^+(\supp(\mu)) - \ell^-(\supp(\mu)) + 2 \ell^-(\supp(\mu)) \\
&= \ell(\supp(\mu)). 
\end{align*} 
The proof of the theorem is now complete. 
\end{proof}

\section{Concluding remarks}
As we said, the results of the previous sections on the lattice of integer flows extend directly to the weighted case, i.e., to the case of graphs with an edge length function.

\subsection*{Covering number and well-(un)balanced orientations}
Let $\mathcal L$ be a lattice of full dimension in a real vector space $\mathcal E$ equipped with a positive quadratic form $q$.   The {\it covering number} of $\mathcal L$ denoted by $\cov(\mathcal L)$ is the smallest real number $r$ such that the closed balls of radius $r$ centered at the lattice points cover $\mathcal E$, i.e., 
such that for every point $x\in \mathcal E$, there is an element $m\in \mathcal L$ such that $q(x-m) \leq r$.

 By the results of the previous section, it is possible to give an explicit description of the covering number of $\Lambda$ in terms of certain orientations of the graph $G$ (that we will call balanced) as follows.  

 Recall that all the vertices of the Voronoi cell of the origin, $V_O$, are of the form $v^D$ for a strongly connected orientation $D$ of the whole graph $G$. In addition, $v^D$ is the flow uniquely determined by the equations 
 
 $$ 2\langle v^D , x^C\rangle = \ell(C),$$
 for all circuits $C \subset D.$
 \begin{proposition}
 The covering number of $\Lambda$, $\cov(\Lambda)$, is the maximum of $q(v^D)$ for $D$ a strongly connected orientation of the whole graph $G$.
 \end{proposition}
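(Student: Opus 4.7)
The plan is to reduce the computation of $\cov(\Lambda)$ to maximizing $q$ over the Voronoi cell $V_O$, and then exploit the convexity of $q$ together with the explicit description of the vertices of $V_O$ obtained in the proof of Theorem~\ref{thm:main}.

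First I would observe that by the translation invariance of $q$ and the $\Lambda$-invariance of the Voronoi decomposition, the quantity
\[
\cov(\Lambda) \;=\; \sup_{x \in \H}\; \min_{\lambda \in \Lambda}\, q(x-\lambda)
\]
can be restricted to $x \in V_O$: every point of $\H$ is $\Lambda$-equivalent to some point of $V_O$, and for $x \in V_O$ the nearest lattice point (with respect to $q$) is, by definition, the origin, so $\min_{\lambda \in \Lambda} q(x-\lambda) = q(x)$. Therefore
\[
\cov(\Lambda) \;=\; \max_{x \in V_O} q(x).
\]

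Next I would argue that this maximum is attained at a vertex of $V_O$. The cell $V_O$ is a compact convex polytope (it is bounded because $\Lambda$ is of full rank), and $q$ is a strictly convex function on $\H$ since it is induced by a positive definite inner product. The maximum of a convex function on a compact convex polytope is always attained at an extreme point, i.e., at a vertex of $V_O$.

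Finally I would invoke the explicit description of the vertices of $V_O$ given in Section~2: the vertices of $V_O$ are exactly the points $v^D$, as $D$ ranges over the strongly connected orientations of the whole graph $G$ (where, as explained there, bridges are oriented arbitrarily and the orientation lives on the bridgeless part). Combining this with the two previous steps yields
\[
\cov(\Lambda) \;=\; \max_{x \in V_O} q(x) \;=\; \max_{D}\, q(v^D),
\]
the maximum being over all strongly connected orientations $D$ of $G$. There is no real obstacle here: the whole argument is essentially a translation of a general convex-geometric principle (max of a convex function over a polytope is at a vertex) through the combinatorial dictionary already established. The only point that deserves a line of verification is the compactness of $V_O$, which follows from $\Lambda$ having full rank in $\H$.
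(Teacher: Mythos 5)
Your argument is correct and is exactly the route the paper has in mind; indeed the paper gives no separate proof of this proposition at all, treating it as an immediate consequence of the vertex characterization ($\H_D=\{v^D\}$ and every vertex of $V_O$ is of this form) stated just before it. Your three steps --- reduce to $\max_{x\in V_O}q(x)$ by $\Lambda$-periodicity, invoke the fact that a convex function on a compact polytope attains its max at a vertex, and then plug in the description of the vertices as the $v^D$ --- are precisely the missing details, and they are all sound (including the compactness of $V_O$, which follows from $\Lambda$ being full rank in $\H$, and the strict convexity of $q$, which follows from the restriction of the inner product to $\H$ being positive definite).
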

 Thus, it will be enough to find a strongly connected orientation $D$ of $G$ such that $q(v^D)$ is maximized.
 
 Let $\Delta$ be the Laplacian operator. We recall that for a function $f : V \rightarrow \mathbb R$, $\Delta(f)$ is the function on  $V$ taking on $y \in V$ the value $\Delta(f)(y) := \sum_{z: \,\{z,y\} \in E(G)} \bigl (f(y)-f(z)\bigr)$.
 
 Let $\mathcal S$ be the family of all the strongly connected orientations of the whole graph. (Recall that if $G$ contains bridges, by our convention as before, we mean all the strongly connected orientation of $G$ minus the set of bridges of $G$.) 
 \noindent Let $D$ be a fixed element of $\mathcal S$, and for a vertex $y \in V$, let $\din_y$ and $\dout_y$ be the in- and the out-degree of $y$ in $D$ respectively.
  
\noindent  Define the function $\mu : \E(D) \rightarrow \mathbb R$ on the set of arcs of $D$ as follows: for all $e \in \E(D)$, $\mu_e = 2v^D_e -1$.  Then we have
$$\textrm{For all circuit }\,\, C \subset D,\,\,\,\, \langle \mu, x^C\rangle =0. $$
  (In other words, $\mu$ is a tension on $D$.)
  
\noindent  It follows that there exists a function $f=f_D: V \rightarrow \mathbb R$ such that for all arc $e = (y,z) \in \E(D)$, $\mu_e = f_D(z) -f_D(y)$.  (In other words, there exists a potential function for $\mu$.)

In addition, since $v^D$ is a flow, the function $f$ should satisfy the following set of equations:

$$\forall y\in V, \,\, \sum_{z: \, (z,y) \in \E(D)} \bigl(f(y)-f(z)+1\bigr) \,\,\,\, = \sum_{z: \, (y,z) \in \E(D)} \bigl(f(z) -f(y)+1\bigr).$$
Or equivalenty,
$$\forall y \in V, \,\, \Delta(f)(y) = \dout_y-\din_y.$$
In other words, $f$ is the solution to the Laplacian equation $\Delta(f) = \dout -\din.$ We now show how to describe $q(v^D)$ in terms of $f$. First we need the following lemma. 

 \begin{lemma}
We have $2\,q(v^D) = |v^D|_{\ell_1} = \sum_{e \in \E(D)} v^D_e$.
 \end{lemma}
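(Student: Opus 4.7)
The plan is to identify the auxiliary tension $\mu$ introduced just before the lemma (defined by $\mu_e=2v^D_e-1$ on $\E(D)$ and extended antisymmetrically to $\mathbb E$) as the key tool. Once $\mu$ is shown to lie in $\im(d)$, the orthogonality of flow and tension spaces in $C_1(G,\mathbb R)$ forces $\langle v^D,\mu\rangle=0$, and the target identity drops out by direct expansion:
\[
0=\langle v^D,\mu\rangle=\sum_{e\in\E(D)}v^D_e(2v^D_e-1)=2q(v^D)-\sum_{e\in\E(D)}v^D_e.
\]

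The only substantive step is therefore to verify that $\mu\in\im(d)$, equivalently that $\langle\mu,x^C\rangle=0$ for every oriented cycle $C$ of $G$. For a circuit $C\subset D$ the computation is immediate from the defining equations of the vertex $v^D$:
\[
\langle\mu,x^C\rangle=\sum_{e\in C}(2v^D_e-1)=2\langle v^D,x^C\rangle-\ell(C)=0.
\]
For a general oriented cycle $C$ of $G$, I invoke the observation already used in the proof of the vertex lemma: since $D$ is a strongly connected orientation of the whole graph $G$, the circuits of $D$ generate the integer flow lattice $\Lambda$, so in particular their indicator flows $x^{C_i}$ span $\H$. Writing $x^C=\sum_i a_i x^{C_i}$ with each $C_i\subset D$ a circuit and appealing to bilinearity reduces this case to the previous one.

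The final identification $|v^D|_{\ell_1}=\sum_{e\in\E(D)}v^D_e$ requires the sign information $v^D_e\geq 0$ for every $e\in\E(D)$, and this is the step I expect to be the main obstacle. Via the representation $v^D_e=\tfrac{1}{2}(1+f(z)-f(y))$ on $e=(y,z)\in\E(D)$ it amounts to the bound $f(y)-f(z)\leq 1$ along every $D$-arc; I would try to establish it from the vertex inequalities $2\langle v^D,x^{C'}\rangle\leq\ell(C')$ applied to auxiliary oriented cycles $C'$ of $G$ passing through $\bar e$, but this feels genuinely more delicate than the main identity above.
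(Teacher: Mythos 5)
Your argument for the main identity $2q(v^D)=\sum_{e\in\E(D)}v^D_e$ is correct and takes a genuinely different route from the paper's. The paper writes $v^D$ as a positive combination $\sum_{C\subset D}\alpha_C\,x^C$ of circuit flows inside $D$ and expands $2q(v^D)=\sum_C 2\alpha_C\langle v^D,x^C\rangle=\sum_C\alpha_C\ell(C)=\sum_{e\in\E(D)}v^D_e$; you instead use orthogonality of the flow space and the coboundary space, applied to $v^D$ and the tension $\mu=2v^D-\mathbf 1_{\E(D)}$. Both proofs depend on the same two auxiliary facts: that $\mu$ is a tension (which the paper records just before the lemma), and that $v^D_e\geq 0$ for all $e\in\E(D)$ (which the paper takes as given via ``we already know $\supp(v^D)=D$'' and which you explicitly flag). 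Since the paper's positive-combination decomposition is itself only available once one knows $v^D\geq 0$ on $\E(D)$, your route does not ask for anything extra — it just makes the dependence visible. Your approach is arguably cleaner in that it isolates the flow/tension duality as the only mechanism.

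As for the step you call the main obstacle, the sketch you give via auxiliary cycles does in fact close it, and more easily than you feared. Fix $e_0\in\E(D)$; by strong connectivity pick a directed $D$-path $P$ from the head of $e_0$ to its tail. Then $C=\{e_0\}\cup P$ is a circuit of $D$, so the defining equality gives $v^D_{e_0}+\sum_{e\in P}v^D_e=\tfrac{|P|+1}{2}$, while $C'=\{\bar e_0\}\cup P$ is an oriented cycle of $G$ of the same length, so membership of $v^D$ in $V_O$ gives $-v^D_{e_0}+\sum_{e\in P}v^D_e\leq\tfrac{|P|+1}{2}$. Subtracting yields $v^D_{e_0}\geq 0$, which, since $v^D$ vanishes on bridges, is exactly what identifies $\sum_{e\in\E(D)}v^D_e$ with $|v^D|_{\ell_1}$. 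So modulo writing out this short computation, your proposal is complete and is a legitimate alternative to the paper's proof.
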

 \begin{proof}
We already know that $\supp(v^D) =D$. Write $v^D$ as a positive combination of the flows $x^C$ for $C \subset D$,
$v^D= \sum_{C \subset D} \alpha_C\, x^C.$
We have 
\begin{align*}
2\, q(v^D) &= \sum_{C\subset D}\, 2\alpha_C \langle v^D, x^C\rangle \\
&=\sum_C \alpha_C \ell(C) = \sum_{e \in \E(D)} \sum_{C: \,e\in C} \alpha_C \\
& = \sum_{e \in \E(D)} v^D_e = |v^D|_{\ell_1}.
\end{align*}
 \end{proof}

Since for $e = (y,z) \in \E(D)$, we have $v^D_e = f(z) -f(y)+1$, it follows that 
\begin{align*}
2q(v^D) &= \sum_{(y,z) \in \E(D)}\, \bigl(f(z) -f(y)+1\bigr) \\
& = |\E(D)| + \sum_{y \in V} (\din_y -\dout_y) f(y)\\
&=  |\E(D)| - \sum_{y\in V} f(y)\Delta(f)(y) = |\E(D)| - \langle f,\Delta(f)\rangle.
\end{align*}
 In addition, $\E(D)$ is the number of the non-bridge edges of $G$. Thus, we have
 \begin{definition-theorem}{\rm
A strongly connected orientation $D$ of $G$ for which the quantity $\langle f_D,\Delta(f_D)\rangle$ is  minimized (for $f_D$ satisfying the Laplacian equation  $\Delta(f_D) = \dout -\din$)  is called well-balanced.}

 Let $\epsilon$ be the number of the non-bridge edges of $G$. Let $D$ be  a well-balanced orientation of $G$. The covering number of $\Lambda$ is  given by 
 $$\cov(\Lambda) = \frac \epsilon 2  -  \frac 12 \langle f_D,\Delta(f_D)\rangle.$$
 \noindent In particular if $G$ is Eulerian, then $\cov(\Lambda) = \frac {|E(G)|}2$. 
 \end{definition-theorem}

\begin{proof}
It only remains to prove the last claim: if $G$ is Eulerian, then $\epsilon = |E(G)|$, and there exists an orientation $D$ of $G$ such that $\din_y=\dout_y$ for all $y\in V$. Clearly $D$ is strongly connected and we have $\Delta(f_D) = 0$, and so $\langle f_D,\Delta(f_D)\rangle=0$, i.e., $D$ is well-balanced! The claim follows. 
\end{proof}

Thus, we are left with the following optimization problem.
\begin{center}
Find a strongly connected orientation $D$ of $G$ such that for $f$ satisfying  $\Delta(f) = \dout -\din,$\\
the quantity $\sum_{y\in V} f(y) (\dout_y - \din_y)$ is minimized. 
\end{center}

 We do not know the complexity of the above problem in general, however, we suspect it to be $\mathscr{NP}-$hard.

\begin{remark}\rm Suppose for simplicity that $G$ is two-edge connected. Let $c(y) := \din_y - \dout_y$ for all $y \in V$. Then the following properties are satisfied
\begin{itemize}
\item[$(i)$] $\sum_y\,\,c(y) = 0\,;$
\item[$(ii)$] for all $y\in Y$, \,\, $c(y) \equiv \deg(y)\,\, (\,\mathrm{mod}\,\,2)\,;$ and
\item[$(iii)$] for all non-empty $X \subsetneq V$, $c(X) = \sum_{y \in X} c(y) < \delta(X),$ where $\delta(X)$ is the number of edges with one end-point in $X$ and one end-point  in $V\setminus X$. 
\end{itemize}
It is not hard to show that the converse is also true: a function $c:V \rightarrow \mathbb Z$ is of the form $\din-\dout$ for a strongly connected orientation $D$ of $G$ if and only if it satisfies the properties $(i), (ii),$ and $(iii)$ above (and in this case, it is possible to find the orientation in polynomial time). Thus, while calculating the covering number, we are left with the optimization problem of finding the minimum of $\sum_{y \in V} f(y)c(y)$ over all the functions $c$ with the above three set of properties and for $f$ satisfying the Laplacian equation $\Delta(f) = c$.
\end{remark}

\vspace{.5cm}

\end{document}